\documentclass[12pt]{article}


\usepackage[margin=1.2in]{geometry}
\usepackage{lipsum}
\usepackage{amsfonts}
\usepackage{graphicx}
\usepackage{epstopdf}
\usepackage[ruled,vlined]{algorithm2e}

\usepackage{color}
\usepackage{subcaption}
\usepackage{amsmath}
\usepackage{amssymb}
\usepackage{enumitem}
\usepackage{bbm}
\usepackage{bm}
\usepackage{hyperref}
\usepackage{verbatim}
\usepackage{float}
\usepackage{booktabs}
\usepackage{ulem}

\usepackage{xcolor}

\usepackage{cleveref}

\usepackage{lineno}
\nolinenumbers


\newtheorem{proposition}{Proposition}[section]

\newtheorem{proof}{Proof}[section]
\newtheorem{remark}{Remark}

\usepackage{amsopn}


\def\*#1{\boldsymbol{#1}}



\newcommand{\R}{\mathbb{R}}

\newcommand{\E}{\mathbb{E}}

\newcommand{\Fcal}{\mathcal{F}}

\newcommand{\Xcal}{\mathcal{X}}
\newcommand{\Ycal}{\mathcal{Y}}

\newcommand{\Ocal}{\mathcal{O}}

\newcommand{\f}{\boldsymbol{f}}
\newcommand{\y}{\boldsymbol{y}}

\newcommand{\kk}{\boldsymbol{k}}
\newcommand{\ttheta}{\boldsymbol{\theta}}
\newcommand{\dd}{\mathrm{d}}


\title{Personalized Algorithm Generation: \\
A Case Study in Learning ODE Integrators\thanks{Submitted to the SIAM Journal on Scientific Computing (SISC)’s Methods and Algorithms for Scientific Computing section May 17, 2021; accepted for publication (in revised form) February 23, 2022; published electronically July 7, 2022.

https://doi.org/10.1137/21M1418629

Funding: DTD and MD received funding from the Helmholtz Association of German Research Centres and performed
    this work as part of the Helmholtz School for Data Science in Life, Earth and Energy (HDS-LEE).
    The work of FD, TB and IGK was partially supported through an ARO MURI (Dr. M. Munson) and the DARPA ATLAS program (Dr. J.Zhou).
    FD is also supported by the DFG (Emmy Noether project 468830823).
    QL is supported by the National Research Foundation, Singapore,
    under the NRF fellowship (NRF-NRFF13-2021-0005).
}}

\date{}

\author{
Yue Guo\thanks{Department of Mathematics, National University of Singapore, 117543, Singapore.}
\and
Felix Dietrich\thanks{Institut für Informatik, TU München, Boltzmannstr. 3, 85748 Garching b. München, Germany.}
\and
Tom Bertalan\thanks{Department of Chemical and Biomolecular Engineering, Whiting School of Engineering, Johns Hopkins University, 3400 North Charles Street, Baltimore, MD 21218, USA.}
\and
Danimir T. Doncevic \thanks{Institute of Energy and Climate Research -- Energy Systems Engineering (IEK-10), Forschungszentrum J\"ulich GmbH,  52425 J\"ulich, Germany; RWTH Aachen University, Aachen 52062, Germany.}
\and
Manuel Dahmen\thanks{Institute of Energy and Climate Research -- Energy Systems Engineering (IEK-10), Forschungszentrum J\"ulich GmbH,  52425 J\"ulich, Germany.}
\and
Ioannis G. Kevrekidis\thanks{Department of Chemical and Biomolecular Engineering and
Department of Applied Mathematics and Statistics,
Whiting School of Engineering, Johns Hopkins University,
3400 North Charles Street, Baltimore, MD 21218, USA.}
\and
Qianxiao Li\thanks{Department of Mathematics, National University of Singapore, 117543, Singapore; Institute of High Performance Computing, A*STAR, 138632, Singapore.
\textbf{Corresponding author:} qianxiao@nus.edu.sg.}
}

\begin{document}

\maketitle

\begin{abstract}
                We study the learning of numerical algorithms for scientific computing, which combines mathematically driven, 
        handcrafted design of general algorithm structure with a data-driven adaptation to specific classes of tasks.
        This represents a departure from the classical approaches in numerical analysis, which typically do not feature such learning-based adaptations.
        As a case study, we develop a machine learning approach that automatically learns effective solvers for initial value problems in the form of ordinary differential equations (ODEs), based on the Runge-Kutta (RK) integrator architecture.
        We show that we can learn high-order integrators for targeted families of differential equations without the need for computing integrator coefficients by hand.
        Moreover, we demonstrate that in certain cases we can obtain superior performance to classical RK methods. This can be attributed to certain properties of the ODE families being identified and exploited by the approach.
        Overall, this work demonstrates an effective learning-based approach to the design of algorithms for the numerical solution of differential equations.
        This can be readily extended to other numerical tasks.
\end{abstract}
\section{Introduction}
\label{sec:intro}

In computational mathematics, one is interested in developing solvers for different types of problems, such as algebraic equations, differential equations or optimization problems. In an abstract setting, these can be written as
\begin{equation}
    L(y, F) = 0,
\end{equation}
where $y \in \Ycal$ is the unknown, $F \in \Fcal$ is the problem instance and $L : \Ycal \times \Fcal \rightarrow \Xcal$ is a mapping representing the problem type. The sets $\Ycal,\Fcal,\Xcal$ are usually some subsets of normed vector spaces and can be finite or infinite dimensional, depending on the application.

Here are some examples: 
\begin{itemize}
    \item
    An algebraic equation $f(y) = 0$ can be recast as
    \begin{equation*}
        L(y, F) = f(y) = 0,
    \end{equation*}
    where $F=f$.
    \item
    An optimization problem $\min_{y} f(y)$ can be recast as
    \begin{equation*}
        L(y, F) = f(y) - \min_z f(z) = 0,
    \end{equation*}
    where $F=f$.
    \item
    A differential equation $\dot{y}(t) = f(y(t))$, $y(0)=y_0$ on $t\in[0,T]$ can be recast as
    \begin{equation*}
        L(y, F)(t) = y(t) - y_0 - \int_{0}^{t} f(y(s)) ds = 0,
    \end{equation*}
    where $F$ comprises the ODE's information related to vector field $f$ and initial condition $y_{0}$.
    Note here that the extrinsic input is $(f, y_0)$ and the output of $L$ is a function of time.
\end{itemize}

For a fixed problem type $L$, a solution operator is a mapping $A : \Fcal \rightarrow \Ycal$,
which produces the true solution $y = A(F)$ given a problem instance $F$, so that $L(y, F) = 0$.
Often, we do not have an explicit means to represent $A$.
Thus, for computational purposes we design a numerical algorithm that computes an estimate solution
$
    y \approx \hat{A}(F,h),
$
where $h>0$ denotes the accuracy of approximation.
We call
$
    \hat{A} : \Fcal \times \R_+ \rightarrow \Ycal
$
an approximate solver, which is consistent if $\lim_{h\rightarrow0} \hat{A}(\cdot, h) = A(\cdot)$.
In this work, we also consider parametric approximate solvers 
$
    \hat{A} : \Fcal \times \R_+ \times \Theta \rightarrow \Ycal
$
where $\Theta$ is a set of solver parameters that can be optimized according to problem settings.




Classical numerical methods design the solver $\hat{A}(\cdot,h)$ by requiring it to perform well {\it over a large and, in general, unstructured class} $\Fcal$. For example, one might seek
\begin{equation}\label{eq:abstract_sup}
    \sup_{F\in\Fcal}
    \| L
        (
            \hat{A}(F,h), F
        )
    \|
    = \Ocal(h^{\alpha}),
    \qquad
    \alpha > 0.
\end{equation}
However, often in practice we are not interested in
such a worst-case approach. In fact, we may want to solve a special class of problems belonging to $\Fcal$ (e.g. only integrate symplectic ODEs), and we may only be interested in the average performance of our method on this class of problems. Hence, instead of~\eqref{eq:abstract_sup}, we may require
\begin{equation}\label{eq:abstract_avg}
E_{\mu}[\| L
        (
            \hat{A}(F,h), F
        )
    \|]
    =\int_{F \in \Fcal}
    \| L
        (
            \hat{A}(F,h), F
        )
    \|
    d\mu(F)
    = \Ocal(h^{\alpha}),
    \qquad
    \alpha > 0,
\end{equation}
where $\mu$ is a probability measure on $\Fcal$ and may be supported on a very small subset.
This imparts structure in $\Fcal$ through $\mu$, and our algorithm is now only required to perform well in expectation under this structure.
In other words, we want to find an approximate solver that is adapted to a restricted problem class.

One can think of this as ``personalized" algorithm: one tuned to the class of problems,
    over the range of parameters of interest.
    This goes beyond algorithm {\it parameter} tuning, to possibly include algorithm structure design and also combinations of
    multiple algorithms.
Success over large and unstructured classes or problems would, of course, allow the algorithm (and the associated code) to be portable across many  physical models, and
has long been an obvious advantage for scientific computation
    - both for learning how to perform it, and for performing it.
For specific applications there are almost always some tuning involved: integrators for stiff vs. nonstiff problems; symplectic integrators for Hamiltonian vs. ``general" nonsymplectic ones, lower vs. higher order optimization algorithms, etc.
Yet, tuning the algorithm to the specific problem was left to the practitioner interested in the specific problem: Which algorithm? What order? What accuracy? How frequent the adaptation? - and has been mostly done ``by hand".
This is not surprising, since incorporating complex and varied structures into algorithms requires a detailed understanding
of the structure of the problem at hand, and often has to be treated on a case by case basis.
However, machine learning allows us to contemplate delegating this task to the computer: we need to first choose the class of problems of interest, and then devise
sufficiently general superstructures (in this case, superstructures for neural network architectures), that will perform the personalized tuning.
For example, we can parameterize the approximate solver as a neural network
$\hat{A}(\cdot,\cdot; \ttheta)$ where $\ttheta \in \Theta$ is a vector of fitting parameters,  that can be determined
from training using appropriate error metrics over a range of tasks of interest.
The parametrization $\hat{A}(\cdot, \cdot ; \ttheta)$ represents an architecture where $\ttheta$ specifies how the algorithm can operate on the $F$ to produce an approximate solution. The optimal way that $F$ is used to produce the solution will depend on the structure of the problem induced by $\mu$, and machine learning can help us find an approximately optimal way to do so.
This is also a form of {\it multi-task learning} in the broad sense, since we want a solver that performs well on not just one {\it task} $F$, but on a distribution of tasks \cite{Hospedales2020metalearning}.
On the algorithmic side, it also shares similarities with the MAML algorithm \cite{finn2017model} in meta-learning (see discussion in \Cref{sec:related_work}).
This also connects with the use of optimization (e.g., a Hamilton-Jacobi-Bellman approach) for the generation of optimal algorithms
(See \cite{traub1980general}, \cite{traub1988information} and \cite{li2017stochastic}).
Such a use of optimization over superstructures for optimal algorithm generation has been recently proposed and illustrated in 
\cite{Mitsos2018algorithms}; we will return to this in \Cref{sec:discussion}. 

In this paper, we will investigate a particular realization of this general problem by studying ODE integrators, which are approximate solvers for some initial value problems. In particular, we develop a learning-based algorithm to generate effective and specialized integrators adapted to specified problem settings.

The rest of the paper is organized as follows. In \Cref{sec:formulation}, we formulate the precise problem of learning integrators. Next, in \Cref{sec:Model Architecture and Choice of Loss Functions} we introduce our architecture to achieve this, based on the Runge-Kutta family of integrators, and our learning algorithm based on losses derived from Taylor expansions. In \Cref{sec:results}, we demonstrate the effectiveness of our learned integrators on selected benchmark function families and provide some analysis to understand the origin of the improvement over classical methods. We conclude with discussions on related work in \Cref{sec:related_work}, together with some general observations and future directions in \Cref{sec:discussion}.

\section{Problem Formulation for Case Study}
\label{sec:formulation}

We focus on a particular realization of the general problem we discussed before: learning high-accuracy integrators adapted to integrating specific families of ordinary differential equations (ODEs).
We begin by introducing the background and basic notions of ODE integrators, with particular emphasis on the Runge-Kutta family of explicit integrators, which form the basis of our neural network parameterization. We conclude this section with the precise mathematical formulation of our integrator learning problem.

\subsection{Ordinary Differential Equations and Integrators}
\label{subsec:ode_integrators}

Consider a time-homogeneous ordinary differential equation describing an initial value problem in $\R^d$
\begin{align}\label{eq:ode}
    \frac{\dd}{\dd t} \y(t) = \f(\y(t)),
    \qquad
    \f : \R^d \rightarrow \R^d,
    \qquad
    \y(0) = \y_{0} \in \R^d.
\end{align}
Here, $\f$ is a vector field driving the evolution equation.
Instead of one specific $\f$, we will consider a family $H$ of vector fields. For simplicity, we will assume that $H$ contains only Lipschitz-continuous functions, so that~\cref{eq:ode} admits a unique solution. Note that~\cref{eq:ode} includes as a special case time-inhomogeneous equations $\dd \y/\dd t  = \f(t, \y)$, since we may always define an additional variable $\tau(t)$ such that $\dd \tau/\dd t = 1$ and redefine $\tilde{\y} = (\tau, \y)$ and $\tilde{\f}(\tilde{\y}) = [1, \f(\tau, \y)]$. 
The only caveat is that this redefinition requires $\f$ to be Lipschitz in $t$, whereas for general ODE theory this condition can be relaxed \cite{iserles2009first}. Nevertheless, for numerical computation such a technical issue is less important, and thus we will hereafter only consider the time-homogeneous case without loss of generality.

For general $\f$, \cref{eq:ode} does not admit an explicit closed-form solution, and one often resorts to a numerical approximation via a solver.
Let $F=(\f, \y_{0})$ define a problem instance and the algorithm parameter $h$ represents a desired level of precision of the solution, then an integrator builds an approximate solution iteratively. In the simplest case of explicit, one-step integrators, 
one iterates the following formula based on an integrator $I_{\hat{A}}$ that computes 
\begin{align}\label{eq:difference}
    \hat{\y}_{n+1} = I_{\hat{A}}(\f, \hat{\y}_{n}, h),
    \qquad
    \hat{\y}_{0} = \y_0.
\end{align}
This produces an approximate sequence $\hat{\y}_n \approx \y(nh)$.
In fact, we can understand the mapping from $F=(\y_0, \f)$ to 
a continuous-time interpolation of $\{ (nh, \hat{\y}_{n}) \}$
as a solver $\hat{A}(\cdot, h) : \mathcal{F} \rightarrow \mathcal{Y}$.

The accuracy of the integrator is measured by the {\it local} and {\it global} truncation errors.
We write the solution of \cref{eq:ode} with $t=nh$ as $\y_{n} := y(nh)$.
The local truncation error is defined as the one-step error between the integrator and the true solution, i.e.
\begin{align}
    E_{h,1}(\y_0) = \| \hat{\y}_{1} - \y_{1} \|.
\end{align}

The integrator is called {\it consistent} if $E_1(\y_0) = o(h)$ for each $\y_0 \in \R^d$.
On the other hand, the global truncation error is
\begin{align}\label{evalu_formula}
    E_{h,n}(\y_0) = \| \hat{\y}_{n} - \y_{n} \|.
\end{align}
The integrator is said to be {\it convergent} if $\lim_{h\rightarrow 0} \max_{m\leq n} E_{h,m}(\y_0) = 0$.

One has finer measures of performance in terms of the {\it order of convergence}. In particular, we say that a convergent integrator is of global order $p>0$ if
\begin{align}
   \max_{m\leq n} E_{h,m}(\y_0) = \mathcal{O}(h^p).
\end{align}$$$$

\subsection{Explicit Runge-Kutta Integrators}
\label{subsec:ERK}

We use the prediction from the RK method with targeted order to ensure the order of our method. In general, our approach is not restricted to the explicit RK family, and other integration methods like multi-stage DAE solvers, etc. are also possible.
Now, let us introduce the family of integrators known as explicit Runge-Kutta integrators (ERK)~\cite{iserles2009first}.
While these methods are well known, we give a brief account here in order to motivate subsequent developments in our learning-based approach, which depends on the structures of RK integrators.

Let us write the solution of the ODE \cref{eq:ode} as
\begin{align}
\y\left(t_{n+1}\right)=\y\left(t_{n}\right)+\int_{t_{n}}^{t_{n+1}} \f(\y(\tau)) \mathrm{d} \tau=\y\left(t_{n}\right)+h \int_{0}^{1} \f\left(\y\left(t_{n}+h \tau\right)\right) \mathrm{d} \tau.
\end{align}
Then, an approximate solution can be found by applying quadrature to the last integral
\begin{align}\label{rk_formula}
\y_{n+1} \approx \y_{n}+h \sum_{i=1}^{m} b_{i} \f\left(\y\left(t_{n}+c_{i} h\right)\right), \quad n=0,1, \ldots
\end{align}
It remains to approximate $\y\left(t_{n}+c_{i} h\right)$ by vectors $\boldsymbol{\xi}_{i}, i=1,2, \ldots, m.$
We set $c_{1}=0$, then $\boldsymbol{\xi}_{1}=\y_{n}$.
The idea behind {\it explicit Runge-Kutta (ERK)} methods is to express each $\boldsymbol{\xi}_{i},i=2,3, \ldots, m,$ by updating $\y_{n}$ with a linear combination of $\f\left(\boldsymbol{\xi}_{1}\right),\ldots, \f\left(\boldsymbol{\xi}_{i-1}\right) .$
This leads to the integrator
\begin{align}\label{rk_derive}
\begin{split}
\boldsymbol{\xi}_{m} &=\y_{n}+h \sum_{j=1}^{m-1} a_{m, j} \f\left(\boldsymbol{\xi}_{j}\right), \\
\hat{\y}_{n+1} &=\y_{n}+h \sum_{i=1}^{m} b_{i} \f\left(\boldsymbol{\xi}_{i}\right).
\end{split}
\end{align}
Appropriate choices of the coefficients $\{ a_{i,j}, b_i \}$ then ensures that our approximation is accurate to the desired level.
To determine these coefficients, we expand and equate the Taylor series of $\y_{n+1}$ with that of $\hat{\y}_{n+1}$ about $\y_{n}$.
Note that the conditions do not define an ERK integrator uniquely, and any choices of the coefficients, from which the same order accuracy can be obtained, are considered as ERK methods.

\subsection{Learning Integrators}

Because a large number of equations need to be solved to obtain the coefficients, deriving high-order ERK integrators is nontrivial. Moreover, the order of convergence $p$ is forced upon all $(p+1)$-times continuously differentiable functions, which may be a much larger family than what one might be interested in integrating in practice. Here, we explore the following: Can one obtain better integrators adapted to a smaller, structured family of problems $\mathcal{F}$?

To this end, we parameterize a family of approximate solvers
\begin{align}
    \mathcal{A}(\Theta)
    := \{ \hat{A}(\cdot, \cdot; \ttheta) : \ttheta \in \Theta \},
\end{align}
%
where $\Theta$ is some subset of a Euclidean space representing the fitting parameters (trainable weights).

Let $\mu$ be a probability measure on $\mathcal{F}$, representing a particular distribution of tasks $F$. 
Let $\mathcal{L} : \R^{d} \times \R^{d} \rightarrow \R_+$ be a loss functions which is minimized when its first two arguments are equal.
Then, we consider the following optimization problem
\begin{align}
\label{eq:learning_formulation}
\begin{split}
    \min_{\ttheta \in \Theta} \quad
    &
    \E_{F \sim \mu, h \sim \nu}
    \left[
        \mathcal{L}(
            \y_{n},
            \hat{\y}_{n})
    +\mathcal{R}(\hat{A}(\cdot, \cdot;\ttheta), F, h)
    \right]\\
    s.t. \quad
    &
    \y_{n} = A(F)(nh) = \y_{0} + \int_{0}^{nh} \f(\y(s)) ds,\\
    &
    \hat{\y}_{n} = \hat{A}(F, h; \ttheta)(nh) 
    = I_{\hat{A}}(\f, \hat{\y}_{n-1}, h; \ttheta),\\
    & F=(\f, \y_{0}),\\
    & n \geq 0.
 \end{split}
\end{align}
Here we define a formula in the last term
$\mathcal{R}(\hat{A}(\cdot, \cdot;\ttheta), F, h)$,
which is independent of the choice for $h$.
We just consider the situation near $t=0$,
only depending on the structure of the task $F$.
This represents a regularization term that allows us to promote certain order of accuracy, and we shall discuss its importance in \cref{subsec:loss}.

Problem \cref{eq:learning_formulation} is the central formulation of this paper, where we rephrase the problem of finding an effective integrator as an optimization problem. Note that this formulation takes explicit account of the fact that the problem instance $F = (\f, \y_0)$ is not generic, but rather belongs to a potentially structured function class $\mathcal{F}$ endowed with a probability measure $\mu$, representing the distribution of tasks. Moreover, note that we also consider a measure over the step sizes $h \sim \nu$, indicating the fact that we are not always looking for integrators that work equally well for all step sizes. Finally, we note that~\cref{eq:learning_formulation} is a population risk minimization problem in the language of machine learning, and hence to solve it we often need to replace the respective expectations by averages over samples from the respective probability measures. In the next section, we will discuss the parameterization of RK-like integrators using neural networks and the choice of loss functions and regularizers that enables one to solve~\cref{eq:learning_formulation} to yield novel integrators.
\section{Model Architecture and Choice of Loss Functions}
\label{sec:Model Architecture and Choice of Loss Functions}

In this section, we outline our method for solving~\cref{eq:learning_formulation}. We begin with the parameterization of the family of solvers $\mathcal{A}(\Theta)$ 
using neural networks, after which we introduce the crucial choice of loss functions and regularizers which enable us to learn accurate integrators.

\subsection{NN Architectures Parameterizing RK-like Integrators}
\label{rk_nn_arch}

Recall that the Runge-Kutta (RK) family of integrators form a sequential linear combination of function evaluations to build the integrator via approximate quadrature. 
Here, we can build a neural network that parameterizes the general form of RK-type integrators.
The integrator is constructed as $\kk_{1} = \f\left(\mathrm{id}(\hat{\y}_{n})\right)$ where $\mathrm{id}$ is the identity function,
$\kk_{i}=h \f\left(\hat{\y}_{n}+\sum_{j=1}^{i-1} \theta_{i-1, j} \kk_{j}\right)$ for $i = 2,\ldots, m$, and $\hat{\y}_{n+1}=\hat{\y}_{n}+\sum_{i=1}^{m} \theta_{ci} \kk_{i}$.
The neural network architecture based on the above formulation is given in the left side of \cref{fig:rk_meta}.
\begin{figure}[htb!]
	\centering
	\includegraphics[width=1.0\textwidth]{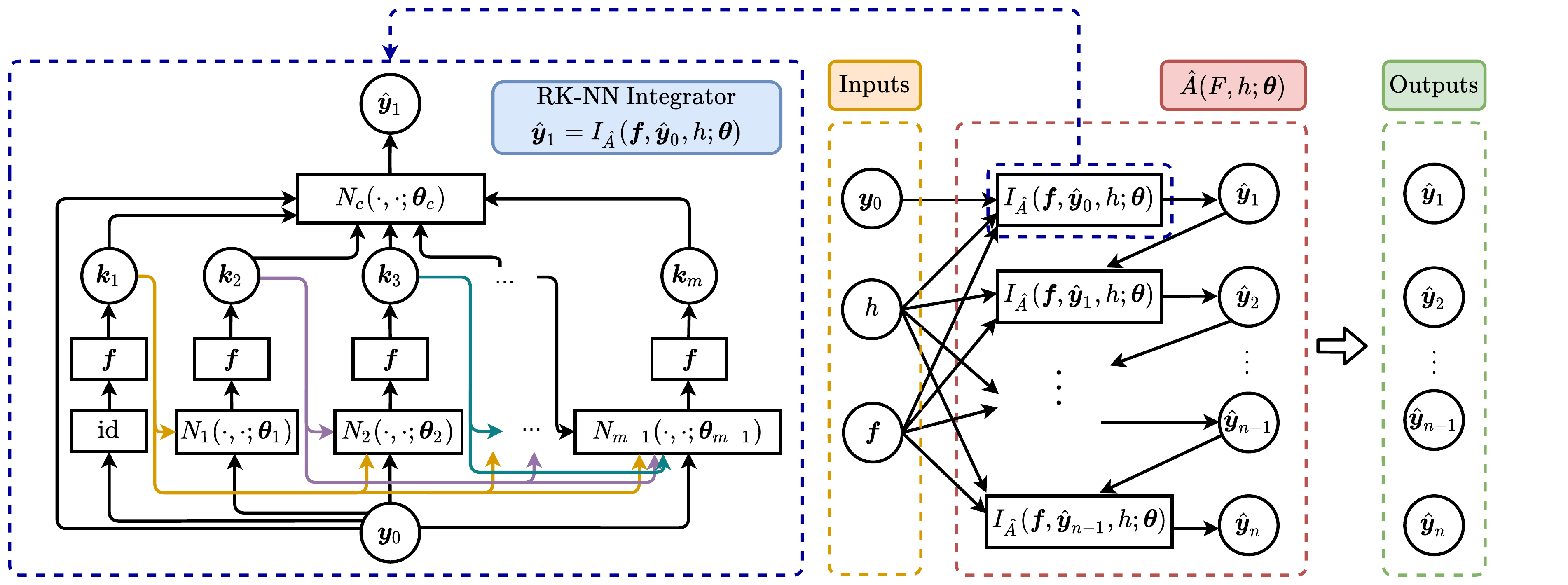}
	\caption{RK-like Neural Network (RK-NN) Architecture.}
	\label{fig:rk_meta}
\end{figure}

There are two types of trainable components in the RK-like neural network (RK-NN):
\begin{enumerate}
	\item Submodels $N_i, i=1,2,\ldots, m-1$, provide inputs to the function $\f$, given $\hat{\y}_t$, $\kk$ from the last step, time step $h$ and parameters $\ttheta_{i} = (\theta_{i,1},\theta_{i,2},\ldots, \theta_{i,i})$.
	\item Combined model $N_c$ has output as $\hat{\y}_{n+1}$, given $\hat{\y}_n$, all the variables $\kk_m$, time step $h$ and parameters $\ttheta_c = (\theta_{c1},\theta_{c2},\ldots, \theta_{cm})$.
\end{enumerate}
Each of these components is represented by a linear layer.
If we set $m = 3$, the architecture represents a generalized RK3 integrator.

Recall the similar architecture of the RK-m method, where
each $N_{i}(\cdot, \cdot; \ttheta_{i})$ is constructed by $\hat{\y}_{n} + \sum_{j=1}^{i}\theta_{i,j} \kk_{j}$.
$\theta_{i,j} \in \R$, and $\hat{\y}_{n}$, $\kk_{j}$ are $d$-dimensional vectors.
The trainable parameters are the coefficients before each $\kk_{j}$.
The transformation in the last layer $N_{c}$ of the whole RK network architecture has the form $\hat{\y}_{n} + \sum_{i = 1}^{m}\theta_{ci}\kk_{i}$.
Thus, the RK-NN parameterization retains the linear combination steps present
in classical RK methods.
Alternatively, the RK-NN can be viewed as a feed-forward neural network with linear layers ($N_{i}$ and $N_{c}$) and the following modifications.
First, we use the vector field $\f$ of an ODE as a nonlinear module that can be interpreted as a custom, vector-valued activation function on the layers.
Second, the layers are not sequential but connected to all the previous $\kk_j$'s.
This viewpoint connects the RK-NN architecture with techniques on exploiting integrators to learn continuous dynamical systems, i.e., the module $\f$, from data. See \Cref{sec:related_work} for further discussion of this point.

Finally, to make sure that the sum of $\theta_{ci}$ is equal to 1, we apply a softmax activation
$
\theta_{ci}=\frac{e^{z_{i}}}{\sum_{j=1}^{m} e^{z_{j}}} \text { for } i=1, \ldots, m
$,
where $z_{i} \in \R$ are the trainable variables in the final layer.
We use softmax to allow for direct application of unconstrained optimization methods.
The downside is that we always have positive values for the coefficients,
which cannot be exactly 0 or 1.
However, the softmax activation can approach 0 or 1 quickly,
and we found in practice that this did not pose a problem.
The total number of trainable parameters in RK-NN is $m+\sum_{i=1}^{m-1} i = \frac{m(m+1)}{2}$.


An important point in the choice of architectures is that we want the integrators to be consistent, i.e., the local truncation error should vanish as $h\rightarrow 0$. This is ensured by our parametric construction, as shown below.

\begin{proposition}
\label{prop:rk_nn_consistent}
	For any $\ttheta \in \Theta$, the NN parameterization in RK-NN (shown in \cref{fig:rk_meta}) is consistent.
\end{proposition}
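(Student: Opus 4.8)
The plan is to verify the definition of consistency directly: with $\hat{\y}_0 = \y_0$, I want to show that the local truncation error $E_{h,1}(\y_0) = \|\hat{\y}_1 - \y_1\|$ is $o(h)$ as $h \to 0$, for every fixed $\y_0 \in \R^d$ and every $\ttheta \in \Theta$. Since the family $H$ consists of Lipschitz vector fields, the exact solution is $C^1$ near $t=0$ with $\y'(0) = \f(\y_0)$, so a first-order Taylor expansion gives $\y_1 = \y(h) = \y_0 + h\f(\y_0) + o(h)$. Thus it suffices to show that the single RK-NN step produces the same leading behavior, namely $\hat{\y}_1 = \y_0 + h\f(\y_0) + o(h)$.

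The core of the argument is an induction over the stages establishing that every increment is first order in $h$ with the Euler slope. First I would show, by induction on $i$, that $\kk_i = h\f(\y_0) + o(h)$; equivalently $\kk_i = \mathcal{O}(h)$ with $\kk_i/h \to \f(\y_0)$. The base case is $\kk_1 = h\f(\y_0)$ (I read the first stage with the standard $h$-normalization, which is exactly what makes the scheme consistent). For the inductive step, the argument of the vector field at stage $i$ is $\boldsymbol{\xi}_i = \y_0 + \sum_{j=1}^{i-1}\theta_{i-1,j}\kk_j$; by the inductive hypothesis each $\kk_j = \mathcal{O}(h)$, so $\boldsymbol{\xi}_i = \y_0 + \mathcal{O}(h) \to \y_0$ as $h \to 0$. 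Continuity of $\f$ then yields $\f(\boldsymbol{\xi}_i) \to \f(\y_0)$, and therefore $\kk_i = h\f(\boldsymbol{\xi}_i) = h\f(\y_0) + o(h)$, closing the induction.

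To assemble the one-step map I would invoke the single algebraic fact baked into the architecture by the softmax layer, namely $\sum_{i=1}^{m}\theta_{ci} = 1$. Combining this with the stage estimates,
\[
\hat{\y}_1 = \y_0 + \sum_{i=1}^{m}\theta_{ci}\kk_i = \y_0 + h\f(\y_0)\sum_{i=1}^{m}\theta_{ci} + o(h) = \y_0 + h\f(\y_0) + o(h).
\]
Subtracting the Taylor expansion of $\y_1$ cancels both the $\y_0$ and the $h\f(\y_0)$ terms, leaving $E_{h,1}(\y_0) = \|\hat{\y}_1 - \y_1\| = o(h)$, which is precisely consistency.

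The main obstacle — and really the only place needing care — is the induction in the second step: because the stages are defined recursively, I must propagate the $\mathcal{O}(h)$ bound forward and confirm that every stage argument $\boldsymbol{\xi}_i$ converges to $\y_0$, relying only on the continuity (Lipschitz property) assumed for $H$; no higher smoothness is needed for an $o(h)$ statement. The other point worth flagging is structural rather than deep: the first stage must carry the factor $h$ so that $\hat{\y}_1 - \y_0 \to 0$, and the softmax normalization $\sum_{i}\theta_{ci}=1$ is what forces the leading increment to match the exact slope $\f(\y_0)$. Together these two features of the RK-NN parameterization are exactly what guarantee consistency for \emph{all} $\ttheta \in \Theta$.
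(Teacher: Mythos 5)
Your proof is correct and follows essentially the same route as the paper's: expand the stages to show each $\kk_i = h\f(\y_0) + o(h)$, use the softmax normalization $\sum_{i=1}^{m}\theta_{ci}=1$ to conclude $\hat{\y}_1 = \y_0 + h\f(\y_0) + o(h)$, and compare with the first-order expansion of the exact solution. The only notable difference is one of care rather than strategy: your explicit induction over stages relies only on continuity of $\f$, which matches the paper's standing Lipschitz assumption on $H$, whereas the paper's proof loosely invokes a ``Taylor expansion of $\f$'' (which would require differentiability); this is a refinement of the same argument, not a different approach.
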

\begin{proof}
	By the definition of explicit RK method and the Taylor expansion of $\f$ at $\hat{\y}_{n}$, we have $\kk_{1}=h\f(\hat{\y}_{n})$ and $
	\kk_{i}=h\f(\hat{\y}_{n}+\sum_{j=1}^{i-1}\theta_{i-1,j}\kk_{j})
			=h\f(\hat{\y}_{n})+o(h)$ for $i = 2,\ldots, m$.
Then the predicted value is $\hat{\y}_{n+1} = \hat{\y}_{n}+\sum_{i=1}^{m}\theta_{ci}\kk_{i}=\hat{\y}_{n}+\sum_{i=1}^{m}\theta_{ci}h\f(\hat{\y}_{n})+o(h)$.
The sum of $\theta_{ci}$ is equal to 1 since 
$
\theta_{ci}=\frac{e^{z_{i}}}{\sum_{j=1}^{m} e^{z_{j}}} $.
Thus, we obtain $\hat{\y}_{n+1} = \hat{\y}_{n}+h\f(\hat{\y}_{n})+o(h)$.
The taylor expansion of the true solution is 
$\tilde{\y}\left(t_{n+1}\right)
=\hat{\y}_{n}
+h \f\left(\hat{\y}_{n}\right)
+o(h)
$.
We observe that coefficients before the first order of h are the same, then $E_1(\hat{\y}_n) = \| \hat{\y}_{n+1} - \tilde{\y}\left(t_{n+1}\right) \|=o(h)$,
which shows this integrator is consistent.
\end{proof}

\subsection{Choice of Loss Function and Regularizer}
\label{subsec:loss}

To solve~\cref{eq:learning_formulation}, we need to define the loss function $\mathcal{L}$ and the regularizer $\mathcal{R}$. 

\paragraph{Loss function}
We make the simple choice of a scaled square loss
\begin{align}
\label{loss}
	\mathcal{L}(
            \y_{n},
            \hat{\y}_{n})
            = \frac{ \| \y_{n} - \hat{\y}_{n} \|^2}{\| \y_{n} - \hat{\y}^{(RK)}_{n} \|^2},
\end{align}
where $F=(\f, \y_{0})$,
$\y_{n} = A(F)(nh)$,
    $\hat{\y}_{n} = \hat{A}(F, h; \ttheta)(nh) 
    = I_{\hat{A}}(\f, \hat{\y}_{n-1}, h; \ttheta)$ and $\hat{\y}^{(RK)}_{n} = \hat{A}_{RK}(F, h)(nh) 
    = I_{\hat{A}_{RK}}(\f, \hat{\y}^{(RK)}_{n-1}, h)$.
$\hat{\y}$ is the prediction from our RK-NN integrator and $\hat{\y}^{(RK)}$ is from the RK method.
Here, we consider one-step prediction by setting $n=1$.
We use the difference between the RK prediction and the true solution
as a scale stabilization numerics, since the errors tend to be small for small $h$.
If we expect our RK-NN method to be trained to a specific order $\alpha$,
RK-$\alpha$ is chosen in \cref{loss} correspondingly.
In the case where the true solution $\y_1 \equiv \y(h)$ is not known, we can compute its Taylor expansion
near $h=0$ up to appropriate order and use it as a surrogate.
Note that computing the Taylor expansion only requires $\f$:
\begin{subequations}\label{soln_ODE_y}
\begin{align}
\label{soln_ODE_y_truncated}
	\y(h)&=\y(0)
	 + \sum_{m=1}^{n}\frac{1}{m!}\frac{\dd^{m} \y(0)}{\dd h^{m}} h^{m}
	 + \mathcal{O}\left(h^{n+1}\right),\\
	 \label{deriv_first}
		\frac{\dd \y}{\dd h}&= \f (\y),\\
		\label{deriv_element}
		\frac{\dd^{m} \y}{\dd h^{m}} &= (\frac{\dd^{m} y_{1}}{\dd h^{m}},\ldots, \frac{\dd^{m} y_{d}}{\dd h^{m}})^{T}, \quad \frac{\dd^{m} y_{i}}{\dd h^{m}}=\sum_{j=1}^{d} \frac{\partial\left(\frac{\dd^{m-1} y_{i}}{\dd h^{m-1}}\right)}{\partial y_{j}} f_{j},
\end{align}
\end{subequations}
The appropriate order of the computed Taylor expansion depends on the desired integrator accuracy.
For example, to obtain a third-order integrator, $n\geq3$ in \cref{soln_ODE_y_truncated} is chosen as a surrogate of the true solution and we choose RK3 as the reference algorithm.

\paragraph{Regularizer}

The loss function alone cannot ensure that we can achieve a desired order of accuracy, since the mean squared loss has vastly different contributions from different values of $h$.
To overcome this issue, we introduce a regularizer that
promotes high order of convergence of the global truncation error over a span of integration step sizes.
Recall that to obtain an integrator with $\mathcal{O}(h^{\alpha})$ global error,
we need the local truncation error to be
$
	\mathcal{O}(h^{\alpha+1}).
$
This is achieved by ensuring
$
    \frac{d^{i}}{d h^{i}}\big|_{h=0}\left(\y_{1}-\hat{\y}_{1}\right)=0, \forall i=1, \ldots, \alpha,
$
or equivalently,
$
	\sum_{i=1}^{\alpha}\|\frac{d^{i}}{d h^{i}}|_{h=0}\left(\y_{1} - \hat{\y}_{1}\right)\|_{2}^2 = 0.
$
The latter is scalar-valued, thus convenient to turn into a regularizer
\begin{align}
\label{taylor_loss}
	\mathcal{R}(\hat{A}(\cdot, \cdot;\ttheta), F, h)
	=
	\sum_{i=1}^{\alpha}\left\|\frac{d^{i}}{d h^{i}}\bigg|_{h=0}\left(\y_{1} - \hat{\y}_{1}\right)\right\|_{2}^2,
\end{align}
which promotes the desired order of convergence.

In implementation, the derivatives can be evaluated at exactly $h=0$ through automatic differentiation
in Tensorflow.
In the case where the true solution is not known and we use the Taylor expansion surrogate \cref{soln_ODE_y_truncated},
automatic differentiation can still be applied at $h=0$ to the surrogate.

\subsection{Learning algorithm}

Having defined the loss functions and regularizers, it remains to train the network using standard stochastic gradient methods, with sample means to approximate the expectations in~\cref{eq:learning_formulation}.
The performance of the RK-NN integrator is quantified by the relative error $\gamma$ compared with the reference RK method, $\gamma < 1$ implies an improvement.
The entire learning algorithm is summarized in Alg.~\ref{alg:learning_algo}.
The code is open source and can be found at \url{https://github.com/GUOYUE-Cynthia/Learning-ODE-Integrators}.

\begin{algorithm}
	\SetAlgoLined
	\SetKwInOut{Req}{Require}
	\SetKwInOut{Init}{Initialize}
	\SetKwFor{ForAll}{for all}{do}{end}
	\KwData{$\mathcal{D}=\{F_{j}, h_{j}\}_{j=1}^{N}$;}
	\Init{Random $\ttheta_{0}$ for operator $\hat{A}(\cdot, \cdot; \ttheta_{0}): \ttheta_{0} \in \Theta , h>0$; \\ Set tolerance $\epsilon >0$; Optimizer \texttt{Opt};}
	\For{$k = 0, 1, \ldots, \# Iterations$}{
	\ForAll{$F_{j}, h_{j}$}{
	Calculate $\y_{n}^{(j)} = A(F_{j})(nh_{j})$;\\
	Calculate $\hat{\y}_{n}^{(j)} = \hat{A}(F_{j}, h_{j}; \ttheta)(nh_{j})$;\\
	Calculate $\hat{\y}^{(RK)(j)}_{n} = \hat{A}_{RK}(F_{j}, h_{j})(nh_{j})$;\\
	Calculate the scaled loss: $\mathcal{L}(\y_{n}^{(j)}, \hat{\y}_{n}^{(j)})$;\\
	Calculate the regularizer:
	$\mathcal{R}(\hat{A}(\cdot, \cdot;\ttheta), F_{j}, h_{j})$;
	}
	Evaluate 
    $\ell = 
    \frac{1}{N}\sum_{j=1}^{N}
    \left[
        \mathcal{L}(
            \y_{n}^{(j)},
            \hat{\y}_{n}^{(j)})
    +\mathcal{R}(\hat{A}(\cdot, \cdot;\ttheta), F_{j}, h_{j})
    \right]$;
	\\
	Update parameters $\ttheta$ using $\texttt{Opt}$ to minimize $\ell$;\\
	Compute the relative error: $\gamma = \frac{1}{N}\sum_{j=1}^{N}\mathcal{L}(
            \y_{n}^{(j)},
            \hat{\y}_{n}^{(j)})$;\\
    \If{$\gamma < \epsilon$}{\textbf{break};}}
	\KwRet{Operator $\hat{A}(\cdot, \cdot; \ttheta).$}
	\caption{Learning Algorithm.}
	\label{alg:learning_algo}
\end{algorithm}


\begin{remark}[Computational complexity and memory usage]
In terms of inference (i.e. integrating a new ODE using the trained RK-NN), computational complexity and memory usage are the same as the traditional RK method.
This is because the difference between RK-NN and traditional RK methods lies only 
in the value of the learned coefficients.
During training, the memory cost of storing weights is independent of the dimension of the ODE,
because the number of parameters in RK-NN is determined by the number of RK stages.
The computational complexity of training is more delicate, and depends on a variety of factors,
including the form of the ODE family, the software implementation of back-propagation, and hardware optimization.
Instead of theoretical estimates,
we show empirically (See supplementary material, \textbf{SM4}) that
training RK-NN integrator on an example linear family and a nonlinear family
has favorable scaling between $O(d)$ and $O(d^{2})$ as the number of dimensions $d$ increases.
Thus, it can be applied to moderately sized integration problems.
\end{remark}

\section{Results}
\label{sec:results}

We now present the results of RK-NN on various test problems,
with particular emphasis on how the learned integrators differ fundamentally from the classical RK methods.
In all experiments, we use the Adam optimizer \cite{kingma2014adam} to train the RK-NN.
To check the accuracy order,
we use the error defined in \cref{evalu_formula}, hereafter abbreviated as $E$,
as the evaluation standard to quantify the global error.
Note that the order of accuracy from different integrators can be inferred from the slopes of the log-scale plots.

\subsection{Learning High-order Integrators}

We first demonstrate that the learning algorithm indeed learns high-order integrators for several test problem settings.
In all experiments, the integration time step $h$ used for training is sampled uniformly in $(0.01,0.1)$.

\paragraph{Linear Task Family}
The simplest task family is the pairs of stable linear functions and initial conditions $(\f, \y_{0}) \in \Fcal$, which has the form
\begin{align}
\begin{split}
    \Fcal &= 
    \{ \y \mapsto -a\y \mid a > 0\} \times \{ \R \}, \\
    \mu &= \mathrm{Distribution}(\{\y \mapsto -a\y; a \sim U(1, 5)\}) \times U(-5, 5).
\end{split}
\end{align}
In this case, the closed-form solution is
$
\boldsymbol{y}(t) = e^{-a t} \boldsymbol{y}_{0}.
$

\paragraph{Square Task Family}
$\f$ is a scaled element-wise square function $f(\y)_i = - a y_i^{2}$ and $\y_{0} \sim U(1,3)$, thus $\Fcal$ has the form
\begin{align}
\begin{split}
    \Fcal &= 
    \{ \y \mapsto -a\y^2 \mid a > 0\} \times \{ \R \},\\
    \mu &= \mathrm{Distribution}(\{\y \mapsto -a\y^2; a \sim U(0.1, 0.5)\}) \times U(1, 3).
\end{split}
\end{align}
The true solution is
$
\boldsymbol{y}(t) = (at+ 1/ \boldsymbol{y}_{0})^{-1}.
$


\Cref{fig:rk3_order3} compares the performance of the learned integrator to that of RK3. For the sake of comparison, we will set $m=3$ in our RK-like neural network (see \cref{fig:rk_meta}). Moreover, we set $\alpha=3$ in the regularizer to promote a similar global truncation order as RK3 integrator.
We observe from \cref{fig:rk3_order3} that we can indeed learn integrators that 
out-performs
the RK3 method when $h$ is in the training range, but becomes worse when extending to a larger range of $h$. 
This is significant since we did not need to compute the Butcher Tableau explicitly, and the coefficients of the RK-NN are chosen automatically via machine learning. This makes the method easily scalable to higher order methods, unlike explicitly derived RK integrators which can become very complex. 
It is also evident that the learned RK-NN is different from a usual RK3 method computed from the Butcher Tableau.
Furthermore, to validate the importance of the regularizer defined in \cref{taylor_loss},
we train RK-NN without the regularizer and compare its performance in \cref{fig:rk3_order3}.
We observe that in this case, the global accuracy depends more sensitively on $h$,
and deteriorates rapidly outside the range of $h$ used for training.


\begin{figure}[htb!] 
\centering
	\begin{subfigure}{0.49\textwidth}
		\centering
		\includegraphics[width=1.0\linewidth]{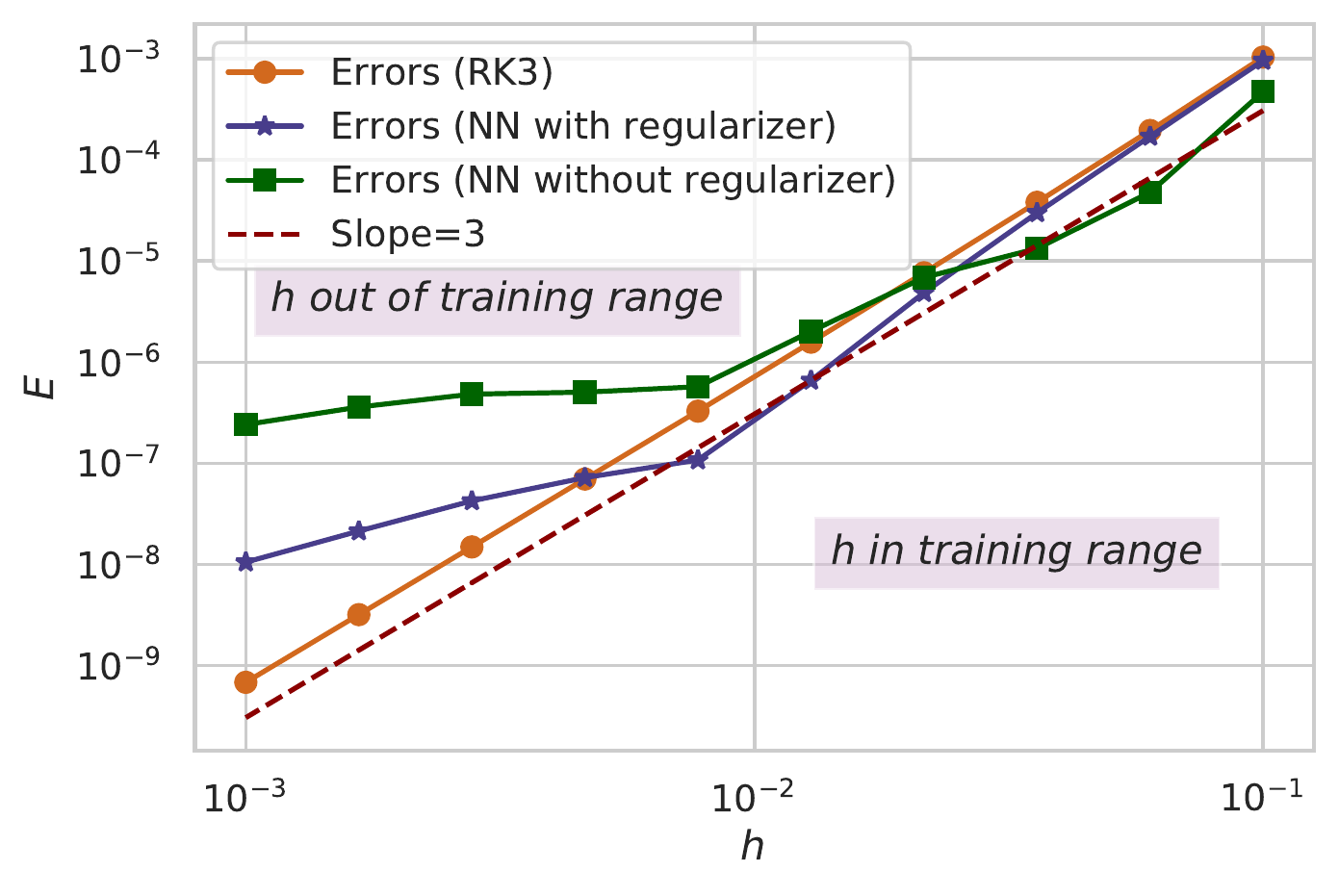}
		\caption{Linear Task Families.}
	\end{subfigure}
	\begin{subfigure}{0.49\textwidth}
		\centering
		\includegraphics[width=1.0\linewidth]{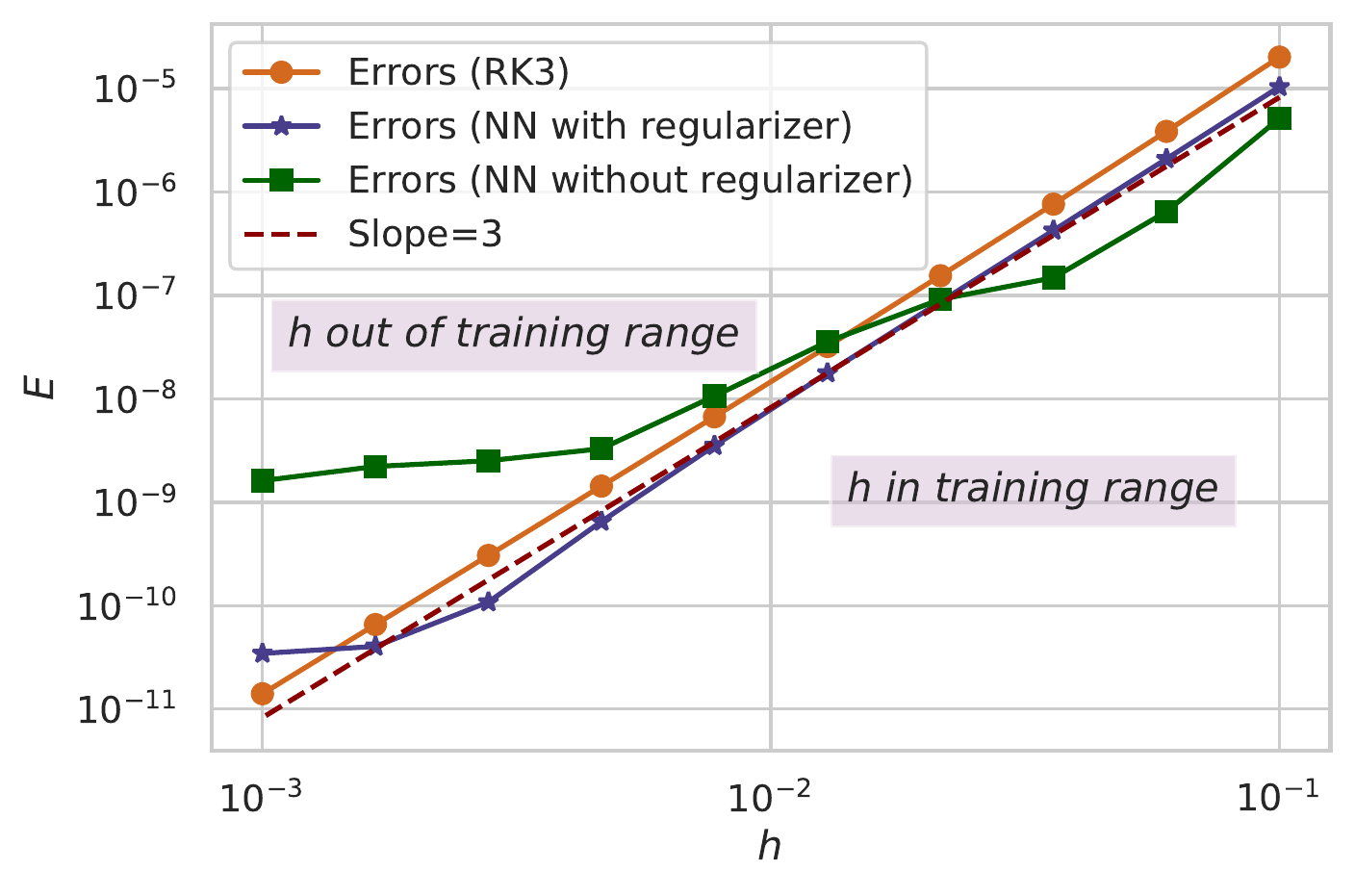}
		\caption{Square Task Families.}
	\end{subfigure}
	\caption{Error comparison among RK3 method,  \textbf{three-stage} RK-NN with and without \textbf{third-order} Taylor-based loss as regularizer.
	The training time step range is  $h \in (0.01, 0.1)$ whereas the testing range extends to
	 $h \in (0.001, 0.1)$.
    }
	\label{fig:rk3_order3}
\end{figure}


\subsection{Generalization Across Task Families}

From the learning formulation, it is clear that our goal is to maximize the performance of our integrator on a specific task family $\Fcal$. Hence, it is expected that the effectiveness of the learned integrators may not generalize across different families. 
\Cref{fig:train_test_diff} shows that this is indeed the case.
This is expected, since the learned integrator is adapted to the family of integration tasks that it is trained on. Nevertheless, note that in all cases the learned integrator maintains consistency, which is ensured by the construction of the parametric NN family (See \cref{prop:rk_nn_consistent}). {\it This shows in particular that we have not learned a generic RK3 integrator.}

\begin{figure}[htb!]
\centering
	\begin{subfigure}{0.49\textwidth}
		\centering
		\includegraphics[width=1.0\linewidth]{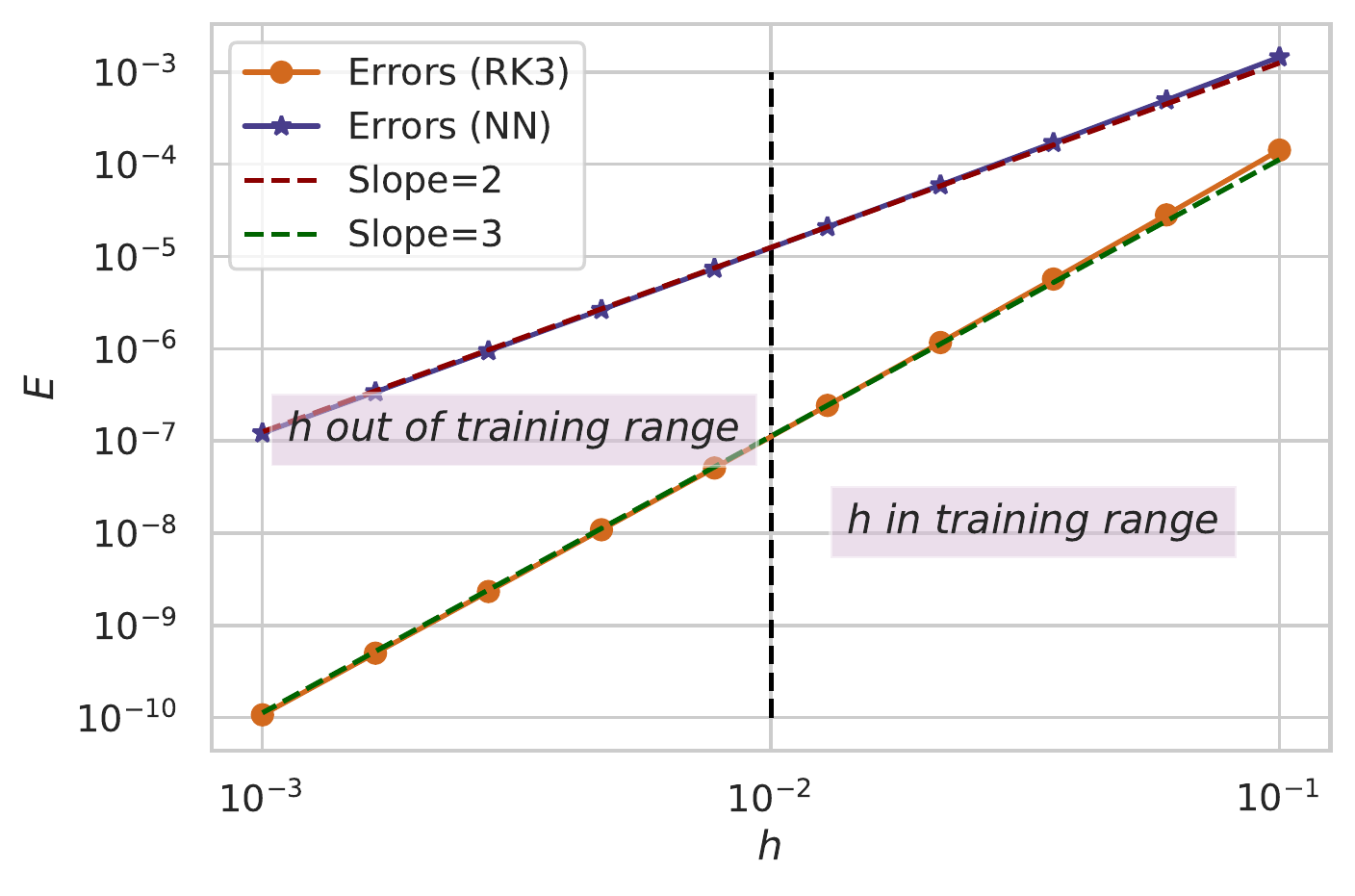}
		\caption{Train on linear task families but test on square task families.}
		\label{fig:train_linear_test_nonlinear}
	\end{subfigure}
	\begin{subfigure}{0.49\textwidth}
		\centering
		\includegraphics[width=1.0\linewidth]{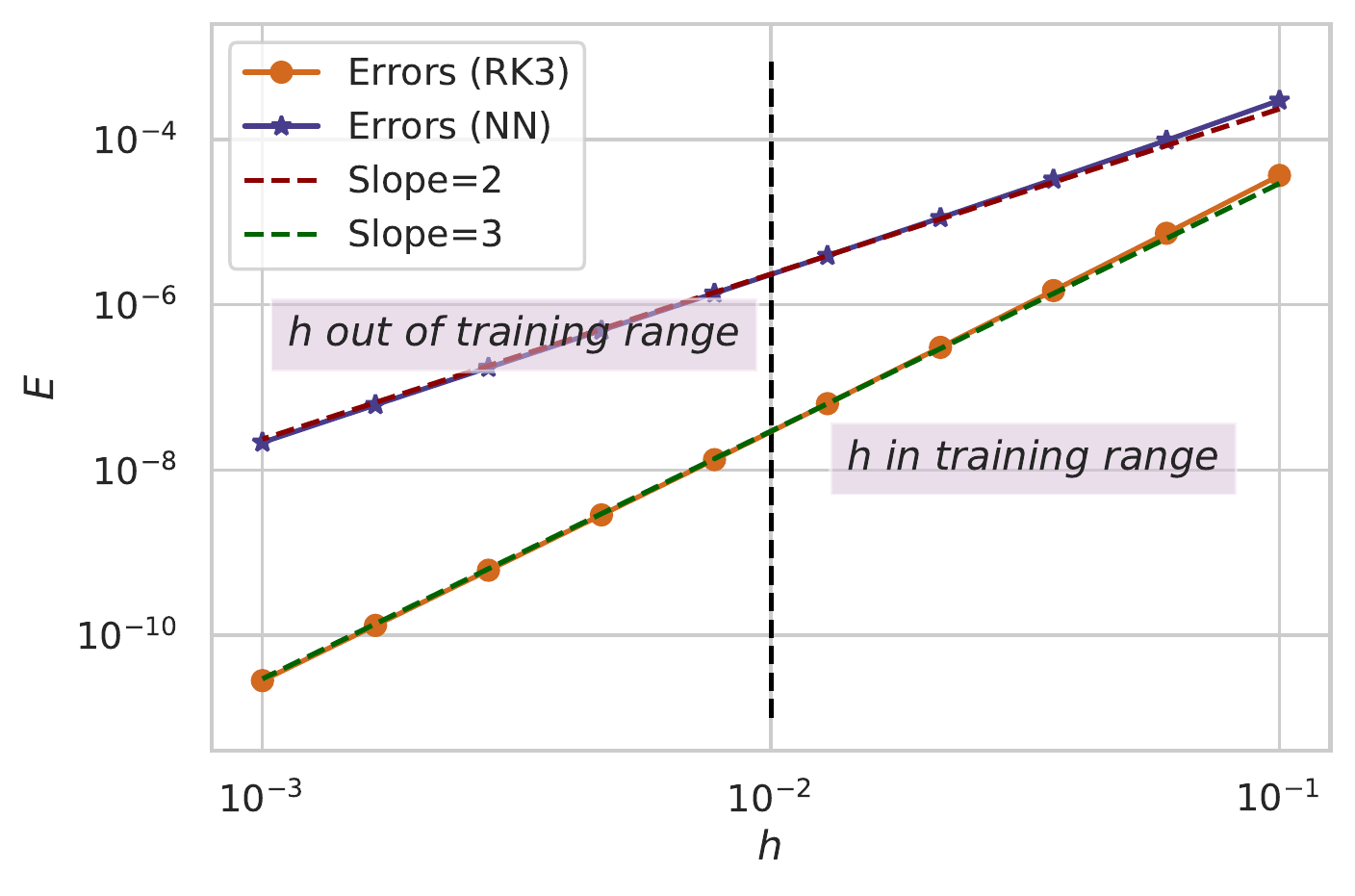}
		\caption{Train on square task families but test on linear task families.}
		\label{fig:train_nonlinear_test_linear}
	\end{subfigure}
	\caption{Training and testing on different task families. Time steps $h$ in these plots are in the range $(0.001, 0.1)$, which includes values both inside and outside the training range.}
	\label{fig:train_test_diff}
\end{figure}

Although the limited generalization ability may appear to be a limitation of the approach, it turns out that this enables us to obtain very efficient integrators that out-performs classical RK integrators on such restricted families. The next part discusses this point in detail.

\subsection{Outperforming Classical RK Integrators}
\label{Outperforming}
In general, if an explicit $m$-stage Runge–Kutta method has order $p$, then it can be proven that the number of stages must satisfy $m\geq p$. In particular, for $p\geq 5$ one can show that we require $m\geq p+1$, so no five-stage RK method can reach order 5 global truncation accuracy~\cite{butcher2016numerical}.
However, we now show that if we limit our attention to a specific task family, our approach can learn integrators that overcome this limitation, at least within a limited range of integration step-sizes. 

\paragraph{Training a two-stage RK-NN integrator to have third-order accuracy}
Here, we set $m=2$ (two-stage RK-NN), but we set $\alpha=3$ in the regularizer, which promotes a third-order accuracy. The results are shown in~\cref{fig:rk2_order3} for the nonlinear (square) task family.
We observe that we can indeed learn a two-stage RK-NN that has third-order accuracy (the slope of the RK-NN error is 3) for this task family.
This is not possible for the general class of Lipschitz functions,
for which a two-stage method can only achieve second-order global accuracy.


\begin{figure}[htb!]
	\centering	\includegraphics[width=0.6\textwidth]{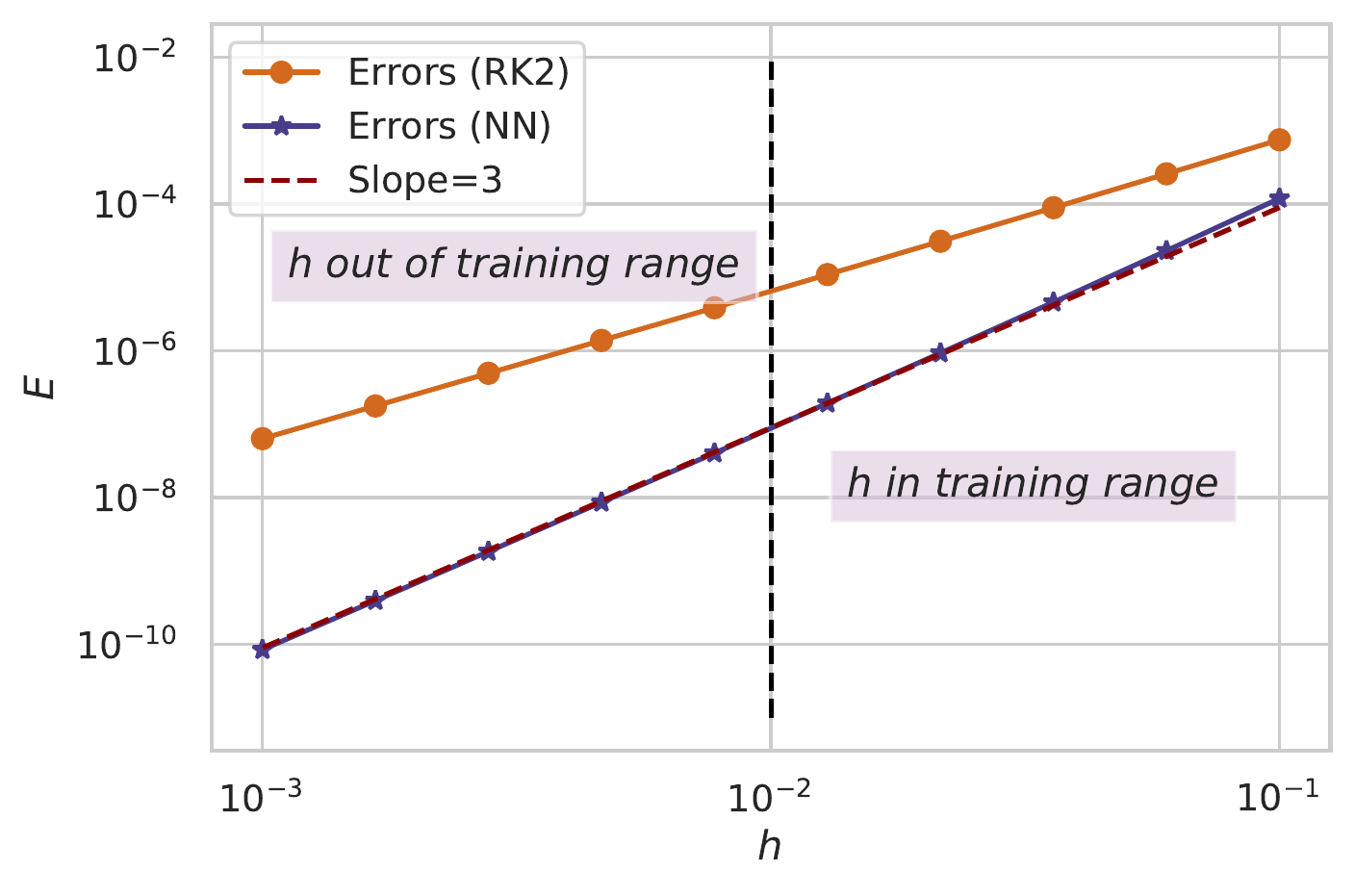}
	\caption{Error analysis on \textbf{square task families}, training on $h\in (0.01,0.1)$ but testing on $h\in (0.001,0.1)$, using \textbf{two-stage} RK-NN integrator with \textbf{third-order} Taylor-based loss as the regularizer.}
	\label{fig:rk2_order3}
\end{figure}

For this specific family, we can in fact understand this phenomenon precisely.
Recall the two-stage RK method for equation \cref{eq:ode}:
\begin{align}
		\kk_{1} = h f(\boldsymbol{y}_{n}), \quad
		\kk_{2} = h f(\boldsymbol{y_{n}} + \theta_{1} \kk_{1}), \quad
		\boldsymbol{y}_{n+1} = \boldsymbol{y}_{n} + \theta_{c1} \kk_{1} + \theta_{c2} \kk_{2},
\end{align}
where $\theta_{1}, \theta_{c1}, \theta_{c2}$ are constants.

For the square task family with $d=1$ , the right hand side of equation \cref{eq:ode} is
 \begin{align}
 \frac{\dd}{\dd t} y(t) = f(y) = - a y^{2},
 \qquad
 y(0) = y_{0} \in \R.
 \end{align}
Then we can obtain
\begin{align}\label{rk2}
	\begin{split}
	k_{1} &= - a y_{n}^{2}h, \quad
	k_{2} = - a y_{n}^{2} h
	       + 2 \theta_{1} a^{2} y_{n}^{3} h^{2}
	       -  \theta_{1}^{2} a^{3} y_{n}^{4} h^{3},\\
	y_{n+1}
	&= y_{n}-\left(\theta_{c1} + \theta_{c2}\right) a y_{n}^{2} h
	+ 2\theta_{1}\theta_{c2} a^{2} y_{n}^{3} h^{2}
	- \theta_{1}^{2} \theta_{c2} a^{3} y_{n}^{4} h^{3}.
	\end{split}
\end{align}

We expand the true solution at $t_{n+1}$,
 subject to the initial condition $y_{n}$ at $t_{n}$.
Due to Taylor's theorem,
\begin{align}\label{taylor_expan_rk2}
	\begin{split}
		\widetilde{y}(t_{n+1})
		&= y_{n}
		+ y^{\prime}_{n} h
		+ \frac{1}{2}y^{\prime \prime}_{n} h^{2}
		+ \frac{1}{6}y^{\prime \prime \prime}_{n} h^{3}
		+ \mathcal{O}(h^{4})\\
		&= y_{n}
		- a y_{n}^{2} h
		+ a^{2} y_{n}^{3} h^{2}
		- a^{3} y_{n}^{4} h^{3}
		+ \mathcal{O}(h^{4}).
	\end{split}
\end{align}
Comparison between \cref{rk2} and \cref{taylor_expan_rk2} shows the conditions for obtaining a third-order integrator by a two-stage RK method:
\begin{align}\label{rk2_taylor3_conditions}
	 \theta_{c1} + \theta_{c2} = 1, \quad 2\theta_{1}\theta_{c2} = 1, \quad \theta_{1}^{2} \theta_{c2} = 1. 
\end{align}
Note that this relies on the fact that we are only considering the family of vector fields of the form $\{- a y_n^2\}$.
In general, it is not possible to obtain third-order accuracy only with a two-stage RK-type integrator. Indeed, the coefficients in the learned RK-NN, 
\begin{align}
\theta_{1}=2, \quad \theta_{c1}=0.75, \quad \theta_{c2}=0.25,
\end{align}
are consistent with conditions in \cref{rk2_taylor3_conditions},
while the standard RK2 coefficients,
\begin{align}
\theta_1^\mathrm{RK}=1,\quad\theta_{c1}^\mathrm{RK}=0.5,\quad\theta_{c2}^\mathrm{RK}=0.5,
\end{align}
are not.

In this example, two-stage RK-NN can provably achieve third-order accuracy, due to the special structure in the task family. In general, one will not expect this to hold for all values of $h$, especially those far out of the training regime. 
Similarly, we also obtain a four-stage RK-NN integrator having sixth-order accuracy when training and testing over $h \in (0.01, 0.1)$, which is shown in supplementary material \textbf{SM1}.



\subsection{Training a RK-NN Integrator on the Van der Pol Oscillator and the Brusselator}
Up to now, we studied the performance of our RK-NN method on 1-dimensional tasks. 
Now we apply our method to multivariate instances.

\paragraph{Van der Pol Oscillator}
\begin{align}
\frac{\dd^{2} u}{\dd t^{2}}-a\left(1-u^{2}\right) \frac{\dd u}{\dd t}+u=0,
\end{align}
where the parameter $a$ is a scalar parameter indicating the nonlinearity and the strength of the damping.
Another commonly used form based on the transformation $v = \dot u $ leads to:
\begin{align}
\begin{split}
\dot{u}&=v, \\
\dot{v}&=a\left(1-u^{2}\right) v-u.
\end{split}	
\end{align}
For the purposes of our method, $\y$ is vector-valued with $\y=(u,v)$.

\paragraph{The Brusselator}
\begin{align}
\begin{split}
\dot{u} &=1-(b+1) u+a u^{2} v, \\
\dot{v} &=b u-a u^{2} v,
\end{split}
\end{align}
where $a, b>0$ are constants and $u, v \in \mathbb{R}$. $u$ and $v$ represent the dimensionless
concentrations of two of the reactants.
Same as the notation in the Van der Pol oscillator, the dynamic variable is also the vector $\y=(u,v)$ in our method.

Conducting experiments on the mentioned 2-dimensional tasks,
we can obtain an integrator better than the traditional RK method on specific ODE problems. 
The highlight is that our trained integrator brings lower error between the true solution and prediction value than the RK-method with the same accuracy order.

It is worth considering why we do not directly use Taylor series with the same order truncation error as our integrator, now that we can leverage the ability to get the differential of $\y$ or $h$ during our training.
When it comes to a third-order algorithm,
 the integrator based on Taylor series is defined as
$\y(h)=\y(0)+\sum_{i=1}^{3}\frac{1}{i!}\y^{(i)}(0) h^{i}$.
We will show the comparison among our RK-NN method, RK3 method and Taylor series integrator in the following experiments.

\cref{fig:vdp_bru_ordercheck} shows that the learned integrator for each task can achieve third-order accuracy respectively on the Van der Pol oscillator and the Brusselator families in the training range.
To make the comparison clear,
we plot the geometric mean of the relative errors with quantified uncertainties with respect to the RK method. 

\begin{figure}[htb!]
\centering
	\begin{subfigure}{0.49\textwidth}
		\centering
		\includegraphics[width=1.0\linewidth]{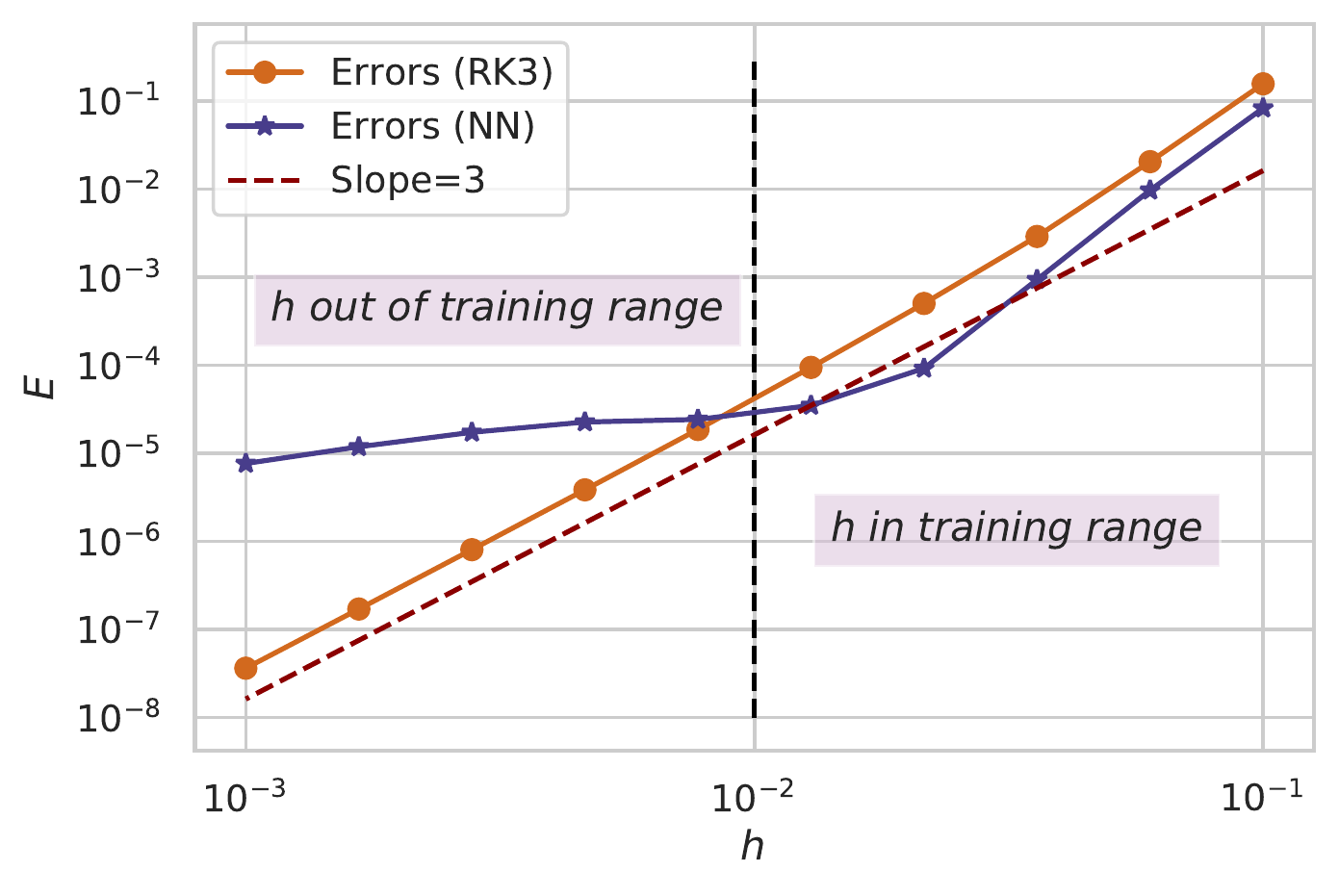}
		\caption{Van der Pol oscillator.}
		\label{fig:vdp_ordercheck}
	\end{subfigure}
	\begin{subfigure}{0.49\textwidth}
		\centering
		\includegraphics[width=1.0\linewidth]{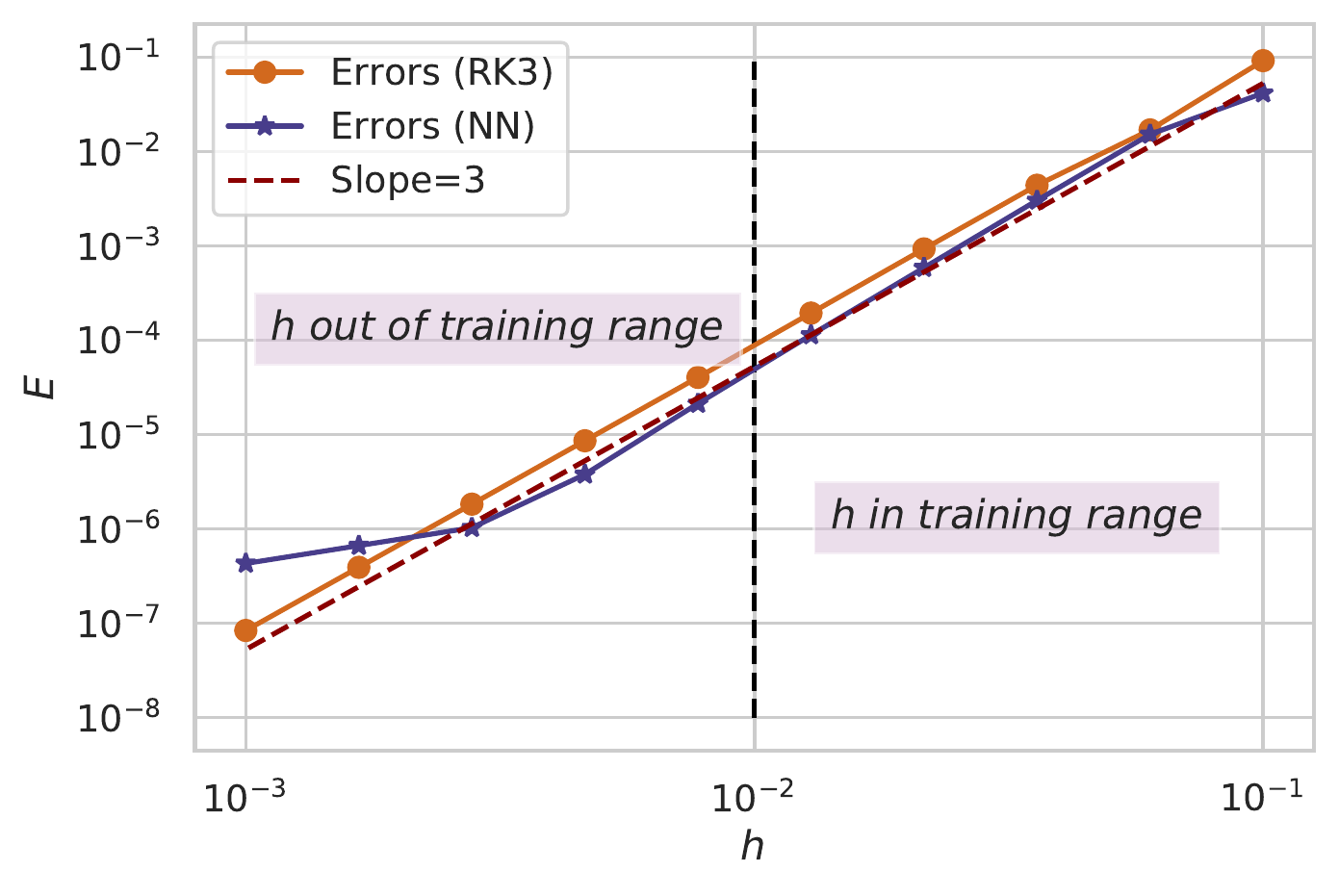}
		\caption{The Brusselator.}
		\label{fig:bru_ordercheck}
	\end{subfigure}
	\caption{Order Check after training for a \textbf{third-order} integrator by \textbf{three-stage} RK-NN integrator.}
	\label{fig:vdp_bru_ordercheck}
\end{figure}	


For Van der Pol oscillator families,
the inputs are sampled with $a \sim U (1,2)$ as vector field parameters and $\{\y_{0}=(u_{0},v_{0}); u_{0} \sim U(-4,-3), v_{0} \sim U(0,2)\}$ as initial conditions. 
Better performance than with traditional RK integrators can be obtained within our selected training range, but the performance can become worse when extrapolating outside of the training range. For instance, \cref{fig:vdp} demonstrates that the learned integrators are adapted to the training range of step sizes $h$, but their convergence order decreases as $h$ attains smaller values.
Further, when we test the RK-NN outside of the training range of the parameter $a$, we observe ambiguous behavior. For $a \in (0,1)$, \cref{vdp_alpha_0_1}, the accuracy of the trained RK-NN becomes worse, whereas the extrapolation to $a \in (2,3)$ in \cref{vdp_alpha_2_3} behaves similarly to $a$ in the training range, i.e., leading to improved performance as long as $h$ is also within the training range. Similar behavior occurs when testing with initial values $(u_0, v_0)$ outside of the training range, \cref{fig:vdp_y0_outside}. 
Altogether, the observations in Figures 6-8 indicate that our RK-NN reliably outperforms traditional RK integrators within the training range, but is not necessarily accurate outside it.


\begin{figure}[htb!]
\centering
	\begin{subfigure}{0.49\textwidth}
		\centering
		\includegraphics[width=1.0\linewidth]{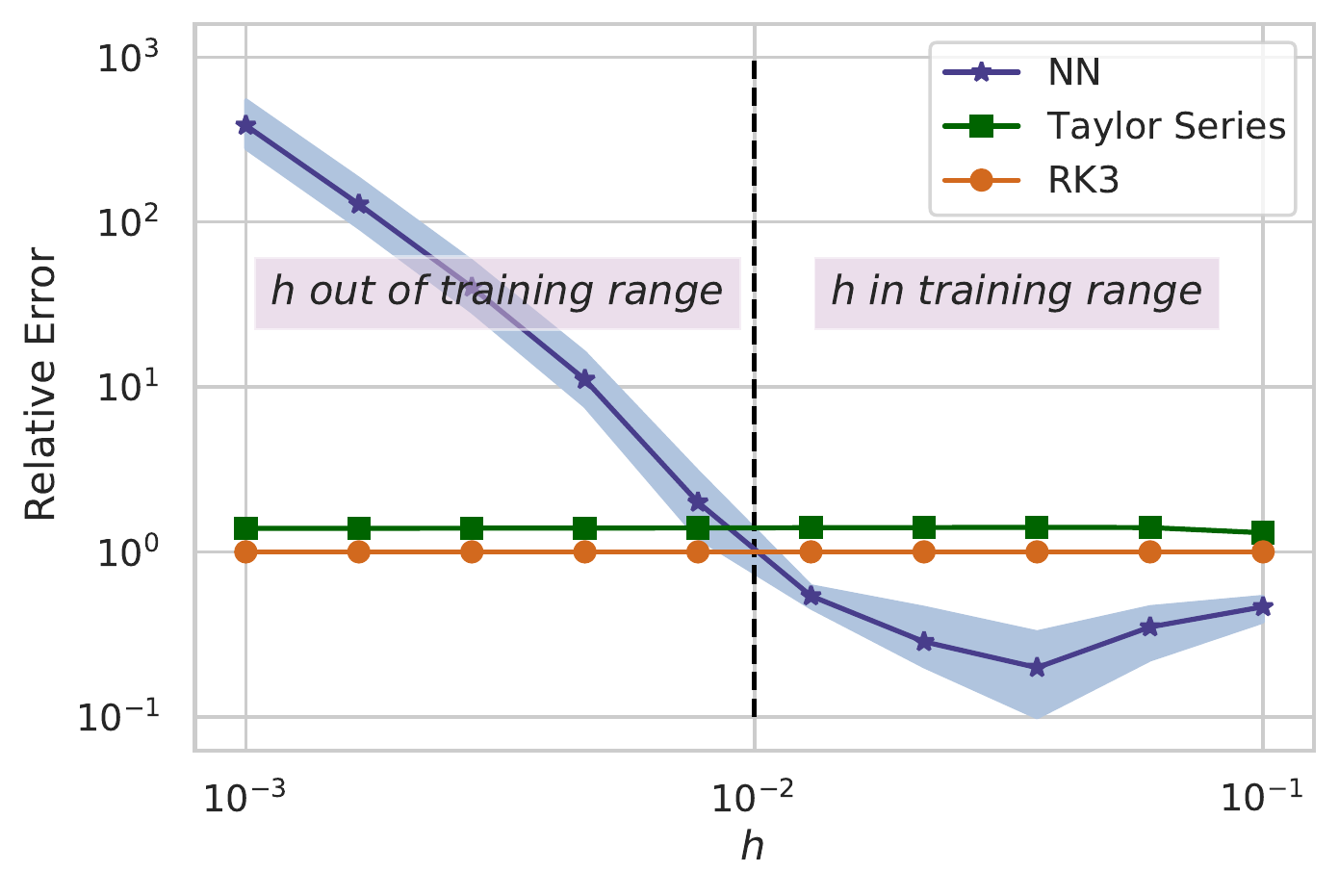}
		\caption{Van der Pol oscillator.}
		\label{fig:vdp}
	\end{subfigure}
	\begin{subfigure}{0.49\textwidth}
		\centering
		\includegraphics[width=1.0\linewidth]{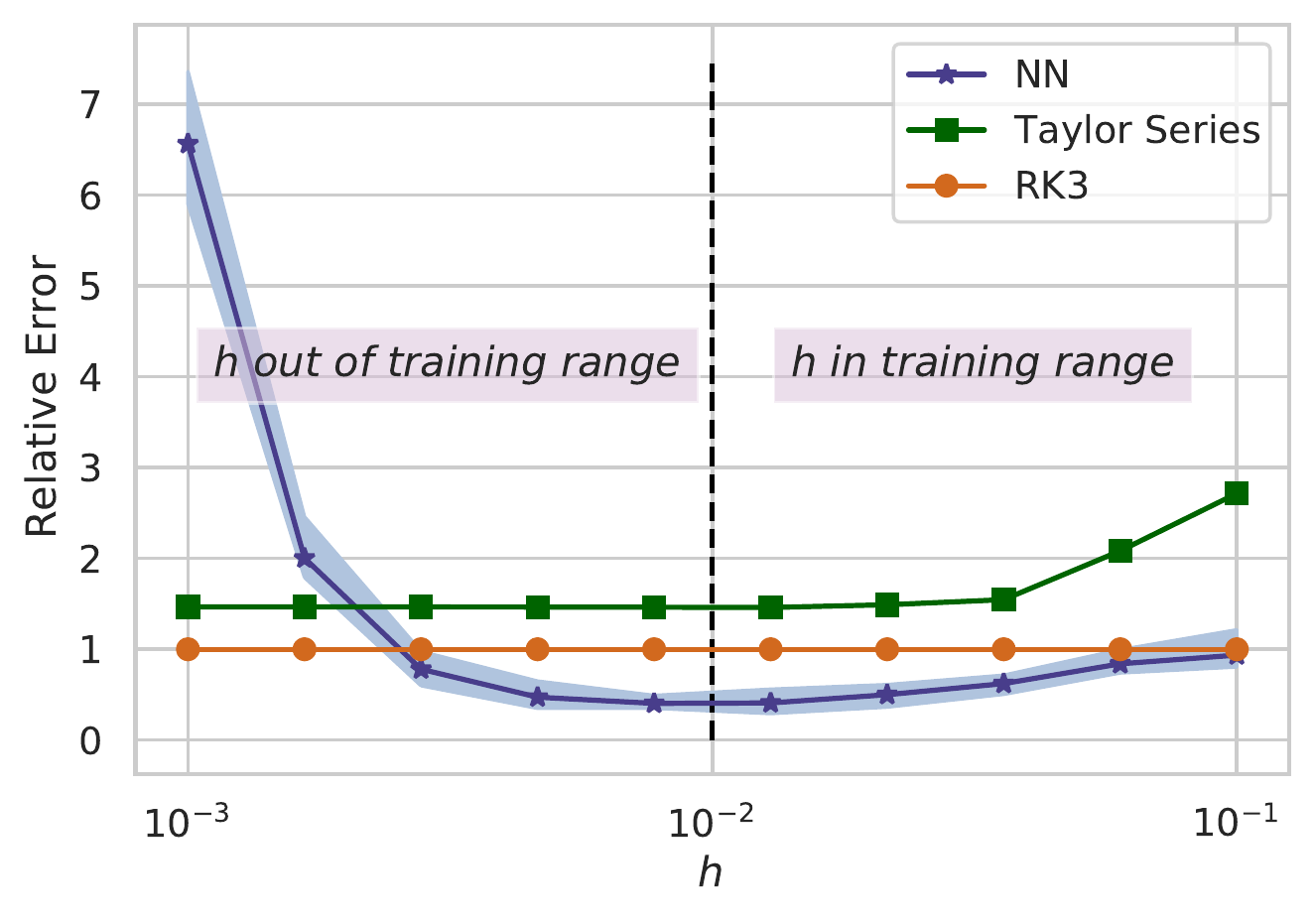}
		\caption{The Brusselator.}
		\label{fig:bru}
	\end{subfigure}
	\caption{Relative error analysis of a three-stage RK-NN integrator compared to RK3. The error of a third-order Taylor Series approximation is also shown.}
	\label{fig:vdp_bru_relative_error}
\end{figure}

\begin{figure}[htb!]
\centering
	\begin{subfigure}{0.49\textwidth}
		\centering
		\includegraphics[width=1.0\linewidth]{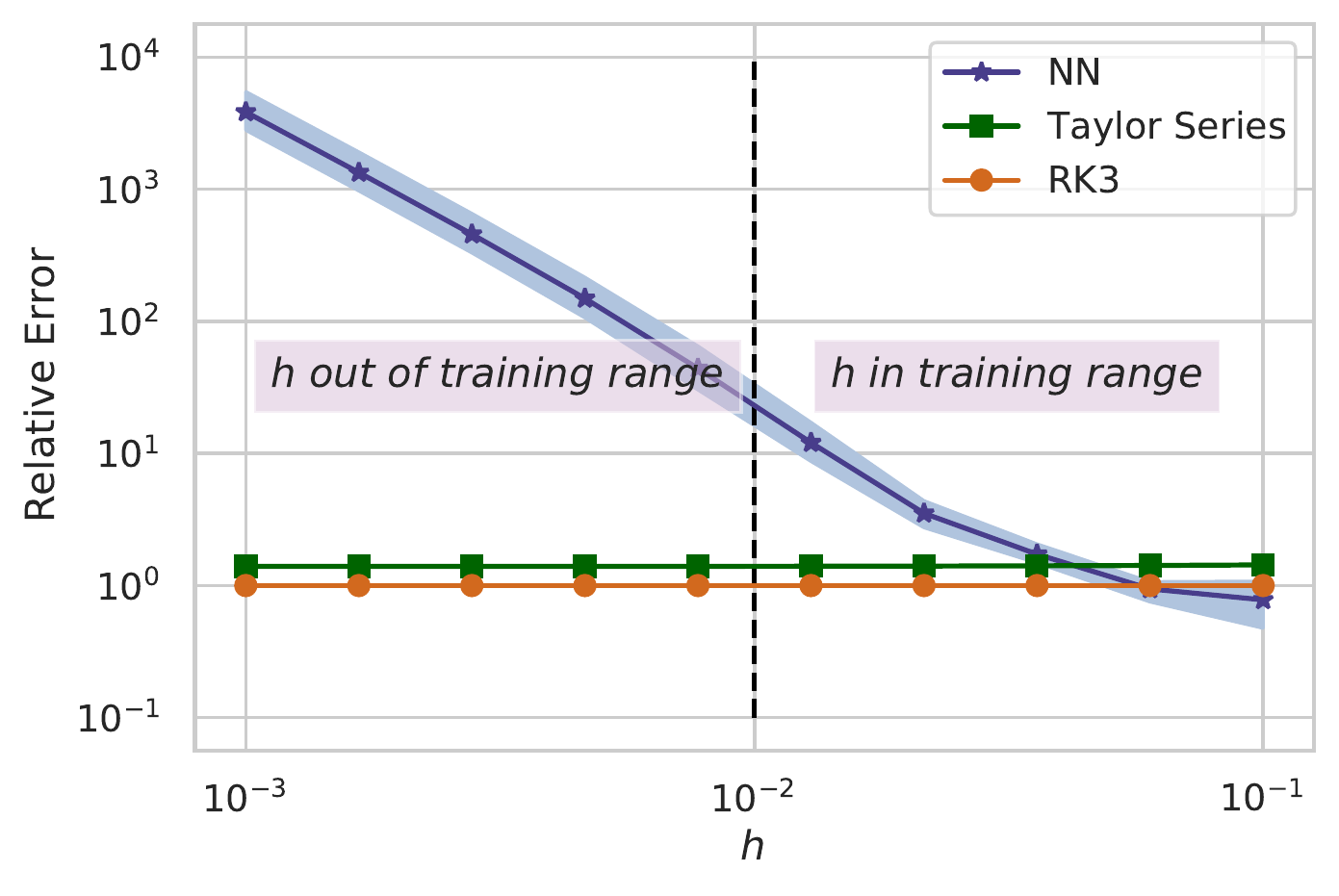}
		\caption{$a \in (0,1)$.}
		\label{vdp_alpha_0_1}
	\end{subfigure}
	\begin{subfigure}{0.49\textwidth}
		\centering
		\includegraphics[width=1.0\linewidth]{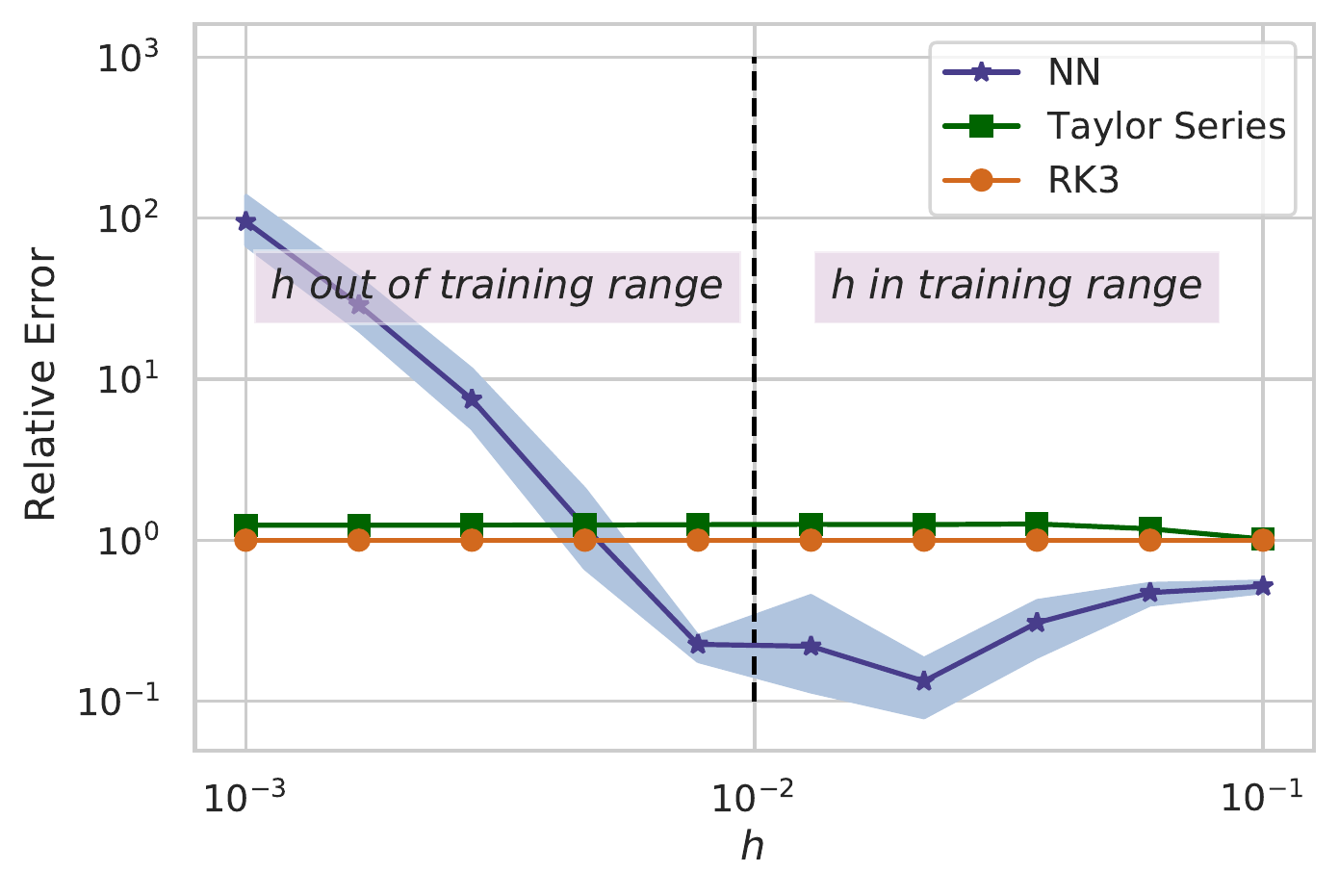}
		\caption{$a \in (2,3)$.}
		\label{vdp_alpha_2_3}
	\end{subfigure}
	\caption{Evaluation on the Van der Pol oscillator with $a$ outside the training range.}
	\label{fig:vdp_alpha_outside}
\end{figure}

\begin{figure}[htb!]
\centering
	\begin{subfigure}{0.49\textwidth}
		\centering
		\includegraphics[width=1.0\linewidth]{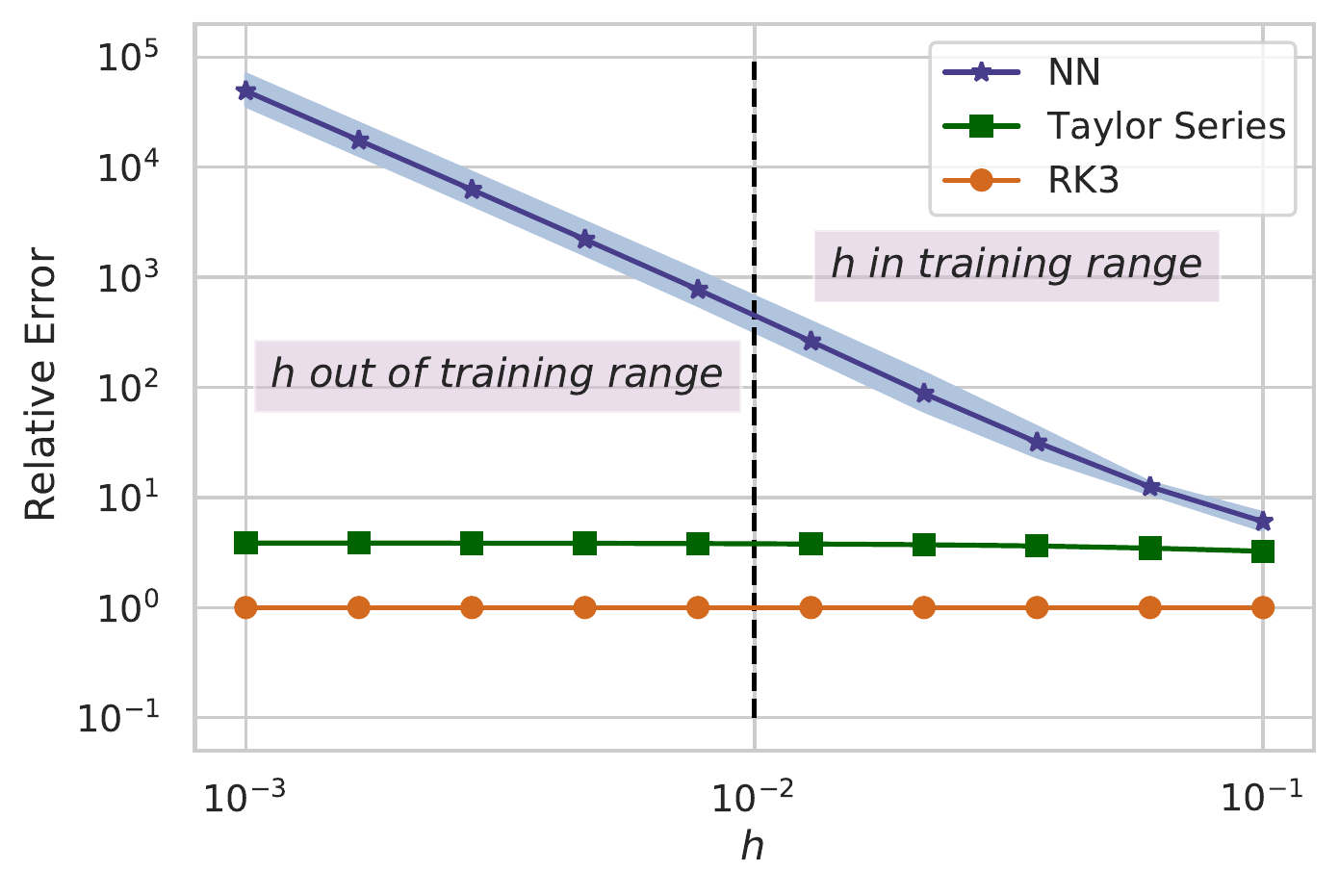}
		\caption{$u_{0} \in (-1,1), v_{0} \in (-1,1)$.}
	\end{subfigure}
	\begin{subfigure}{0.49\textwidth}
		\centering
		\includegraphics[width=1.0\linewidth]{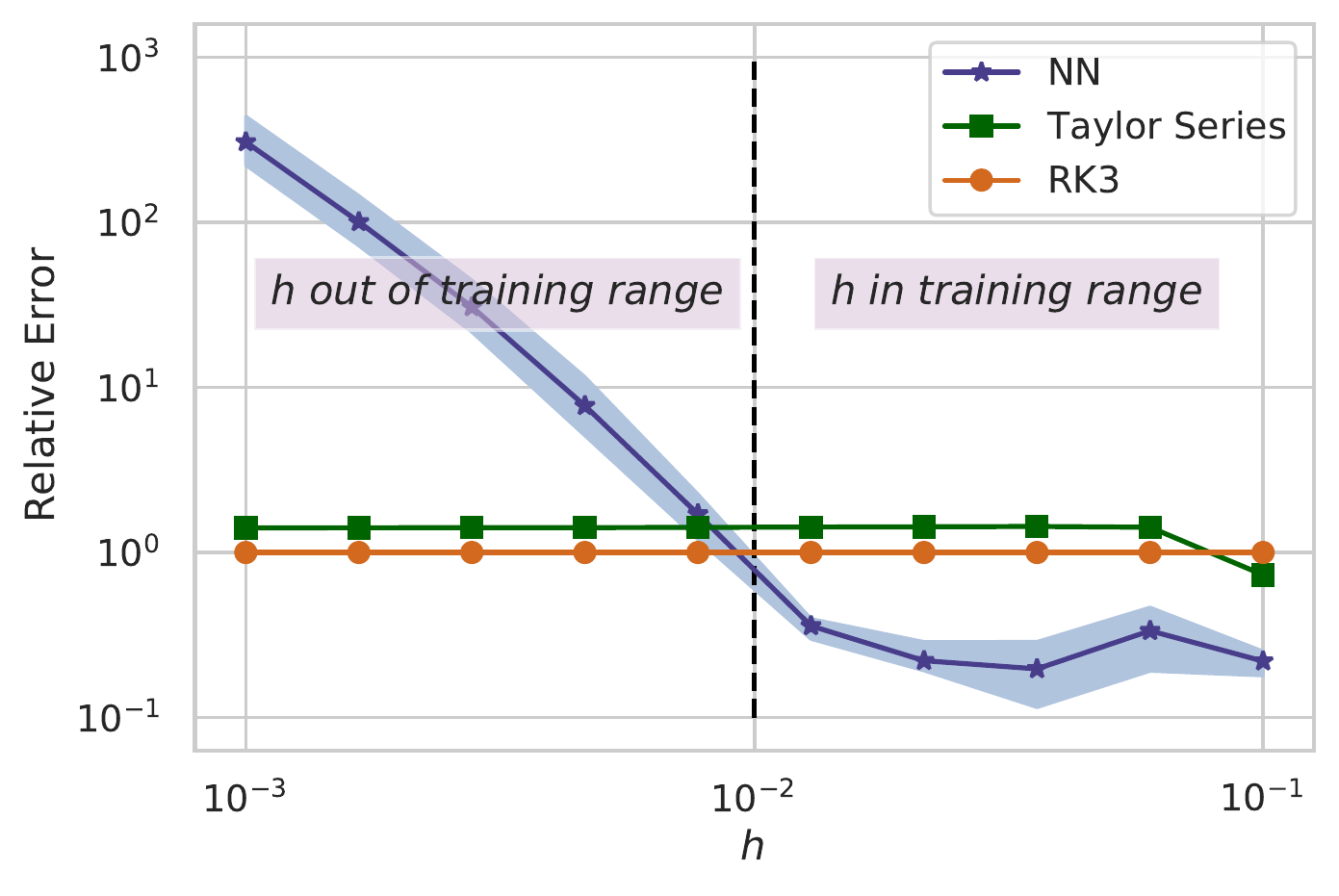}
		\caption{$u_{0} \in (3,4), v_{0} \in (-2,0)$.}
	\end{subfigure}
	\caption{Evaluation on the Van der Pol oscillator with inputs $(u_{0}, v_{0})$ outside the training range.}
	\label{fig:vdp_y0_outside}
\end{figure}

The RK-NN integrator on the Brusselator families is trained with $b \sim U(0.5,2)$, $a=1$ and initial conditions $\{\y_{0}=(u_{0},v_{0}); u_{0} \sim U(1.5, 3), v_{0} \sim U(2, 3)\}$. 
Our method performs favorably inside the training range in \cref{fig:bru}, but has worse performance if the inputs are outside the training range, which is shown in supplementary material \textbf{SM2}. 

To better understand the training process and the role of the two losses (MSE without a scale, Taylor-based regularizer), the curves of losses during the training are shown in \cref{fig:loss_curve}.
Notice that there is a minimum of MSE loss (epoch=300) for the Van der Pol oscillator, and then it increases to a stable value (epoch from 300 to 900).
The performance at that minimum point and subsequent trend are shown in \cref{fig:vdp_mse_min}.
Here, the relative error is smaller compared with that from the final trained integrator when $h$ is around 0.1.
Obviously, as $h$ increases, the prediction will become more inaccurate. Since the MSE error obtained in the training process is the average of the MSE losses at different time steps, the errors obtained when $h$ is large will dominate the average. With this specific parameterization (at the MSE minimum point), the integrator performs well when $h$ is around 0.1, but its accuracy cannot be extended to the entire training range. 
However, the final training results (\cref{fig:vdp}) show that the integrator performs slightly worse under the larger time step, but favorable performance is obtained for all $h$ within the training range. This is enfoced by minimizing the Taylor loss, which ensures the appropriate order of the integrator, despite increasing the MSE.

\begin{figure}[htb!]
\centering
	\begin{subfigure}{0.49\textwidth}
		\centering
		\includegraphics[width=1.0\linewidth]{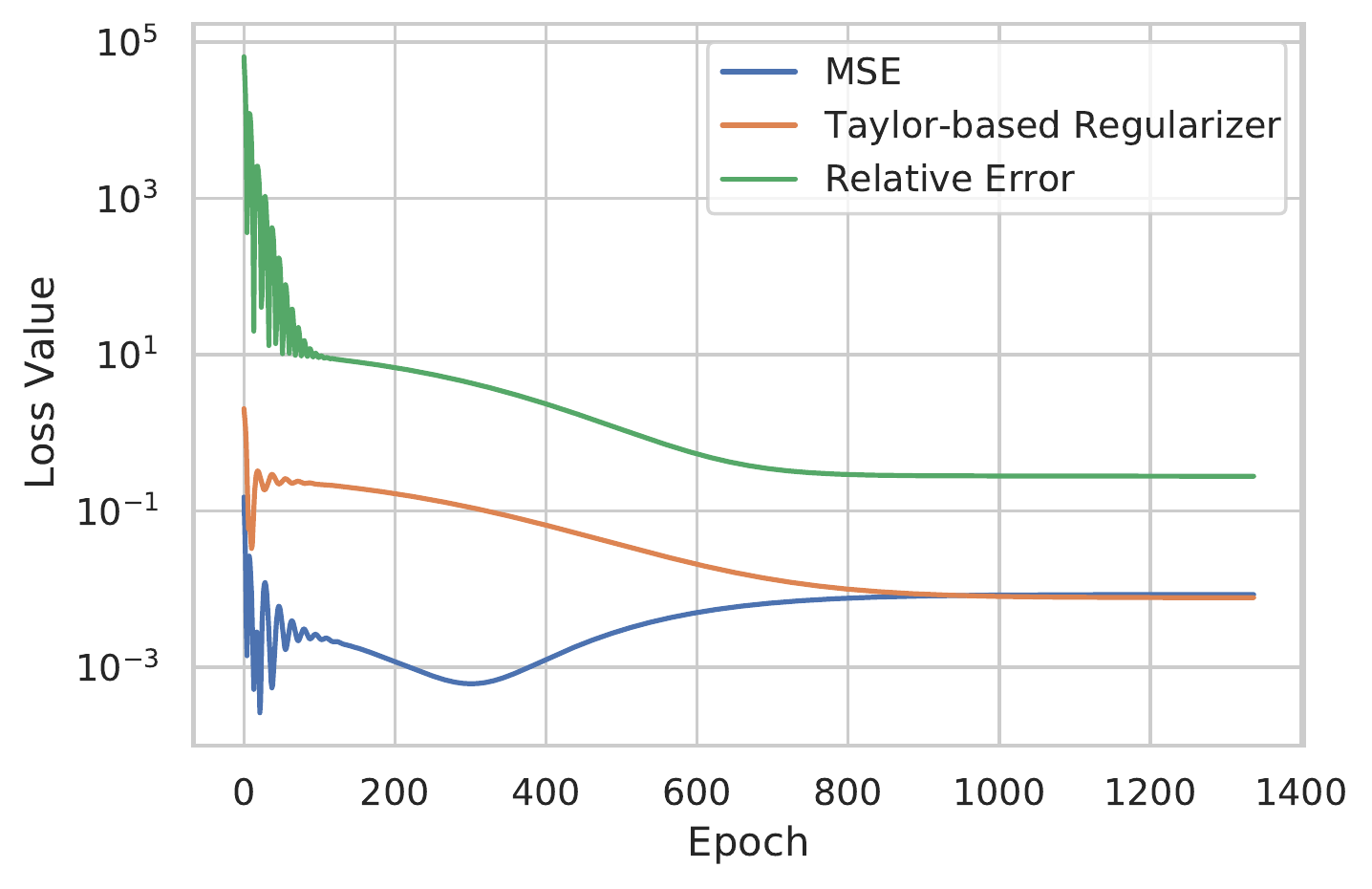}
		\caption{Training loss curve.}
		\label{fig:loss_curve}
	\end{subfigure}
	\begin{subfigure}{0.49\textwidth}
		\centering
		\includegraphics[width=0.95\linewidth]{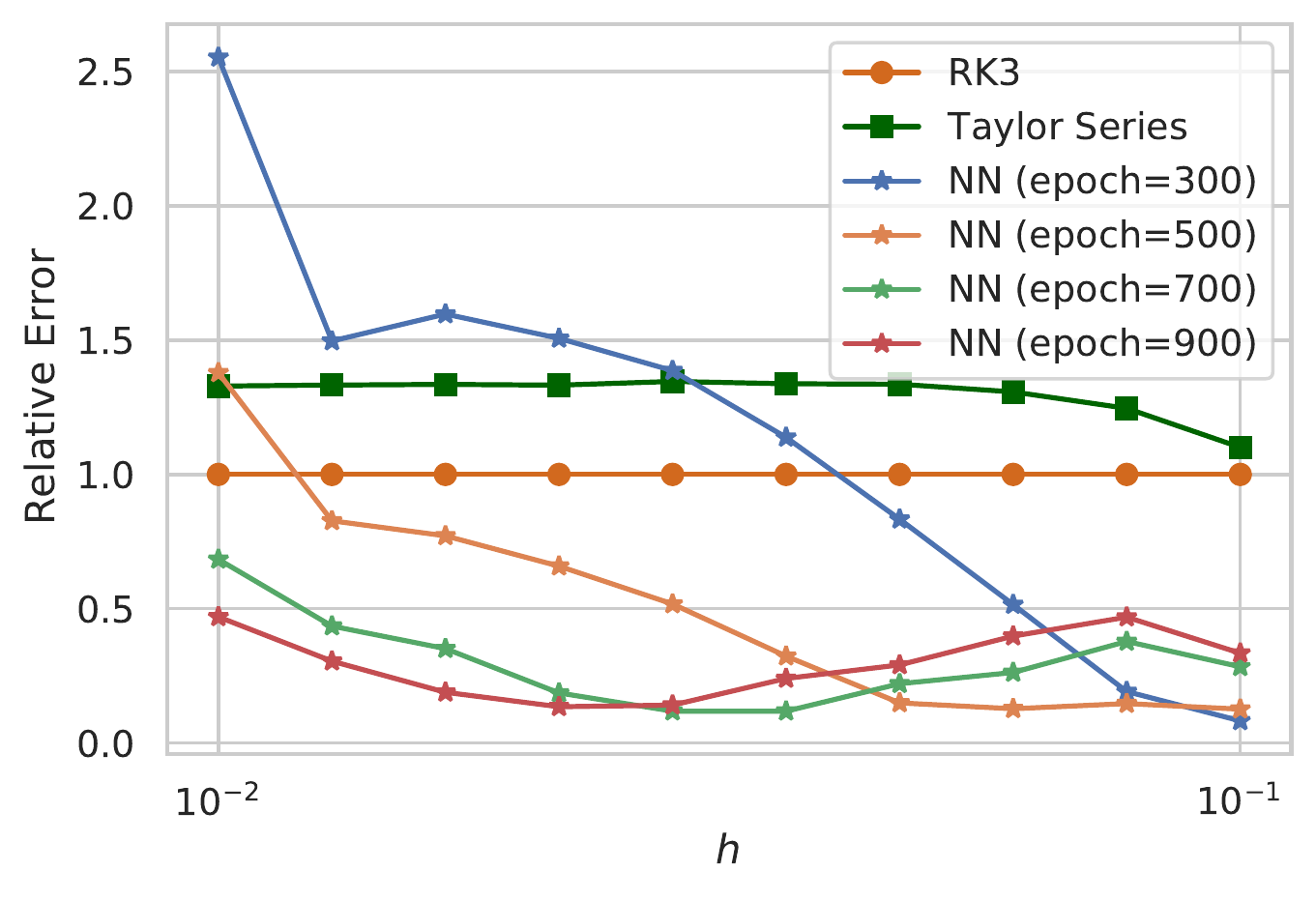}
		\caption{Relative errors at different epochs.}
		\label{fig:vdp_mse_min}
	\end{subfigure}
	\caption{The performance of the Van der Pol oscillator during training is depicted in these two figures. The left hand side shows the loss curves of MSE between the predicted value and true value, Taylor-based regularizer, and the relative error compared with the reference RK method. The right hand side illustrates relative error analysis on the Van der Pol oscillator around the minimum point and later.}
	\label{fig:vdp_loss}
\end{figure}


In summary, we observe that when applied to a problem coming from the family RK-NN is trained on,
the RK-NN can achieve higher accuracy compared with the classical RK methods
within the training range of $h$.
In simple cases, this can be proved theoretically (e.g., \cref{Outperforming}).
In general, we observe that RK-NN finds
``personalized'' RK-like schemes for each problem family with superior performance
(See supplementary material \textbf{SM3} for more quantitative comparisons).
To further illustrate this point, we 
include \cref{table:rk3_paras} that outputs the learned Butcher Tableau values for each problem and compares them with the corresponding coefficients in three generic RK3 methods.
One can observe that in each problem, we obtain specialized values, which illustrates the adaptiveness of our method.

\begin{table}[htb!]
    \caption{Parameter comparisons between three generic RK3 methods and three-stage RK-NN integrators over different task families. We repeatedly trained our RK-NN from multiple random initializations of neural networks and present three arbitrarily selected sets of parameters here, which are denoted by serial numbers in the Van der Pol oscillator and the Brusselator task families, as shown in this table.}
    \label{table:rk3_paras}
    \vspace{20pt}
    \centering
    \begin{tabular}{p{2.5cm}p{1.2cm}p{1.2cm}p{1.2cm}p{1.2cm}p{1.2cm}p{1.2cm}}
        \hline
         & $\theta_{1,1}$ & $\theta_{2,1}$ & $\theta_{2,2}$ & $\theta_{c1}$ &  $\theta_{c2}$  & $\theta_{c3}$ \\
        \hline
        Linear Task  & 0.9682   & 0.1036  & 1.0331                      & 0.5124   & 0.3218  & 0.1657     \\
        Square Task  & 0.9206   & 0.8227  & 1.2625                      & 0.5533   & 0.3704  & 0.0763     \\
        Van der Pol(\#1)  & 0.6880   & 0.3382  & 0.7205                      & 0.4333   & 0.2746  & 0.2920     \\
        Van der Pol(\#2)  & 0.7224   & 0.2493  & 0.6112                      & 0.3718   & 0.3203  & 0.3079     \\
        Van der Pol(\#3)  & 0.7607   & 0.3976  & 0.7357                      & 0.4704   & 0.2749  & 0.2547     \\
        Brusselator(\#1)  & 0.7072   & -0.1350 & 0.7387                      & 0.2462   & 0.4344  & 0.3194     \\
        Brusselator(\#2)  & 0.5459   & -0.1794 & 0.9770                      & 0.2258   & 0.4672  & 0.3070     \\
        Brusselator(\#3)  & 0.6841   & -0.0699 & 0.7215                      & 0.2536   & 0.4178  & 0.3286     \\
        \hline
        RK3(\#1) & 0.6667 & -0.5 & 0.5  & -0.25 & 0.75 & 0.5 \\
        RK3(\#2) & 0.6667 & 0.1667 & 0.5  & 0.25 & 0.25 & 0.5 \\
        RK3(\#3) & 0.5 & -1 & 2  & 0.1667 & 0.6667 & 0.1667 \\
        \hline       
    \end{tabular}
    \label{bs2}
\end{table}


\section{Related Work}
\label{sec:related_work}

In this section, we discuss related work in the literature.
We divide them into four categories, namely learning
solvers by neural networks, learning continuous-time dynamical systems using integrator-embedded parametrization, learning the integrators themselves, and finally meta-learning in general.

First, for learning the operators using ANNs have been proposed in \cite{chen1995universal, raissi2017physics, lu2019deeponet}.
This can be applied in particular to learning anti-derivative operators that can be used for the solution of differential equations. 
We parameterize the solvers using a generalized integrator superstructure, for which we can derive novel loss functions that promote high-order accuracy.

In the second direction,
integrators are useful in computing the solution of dynamical equations and neural networks are effective in approximating general vector fields corresponding to dynamical models \cite{lapedes1988neural, hudson1990nonlinear, krischer1993model}.
Integrator (super)structures like the RK-NNs
have traditionally inspired neural network architectures for effective discovery of vector fields \cite{rico1992discrete}.
Originally, a crucial motivation for the introduction of such methodologies was the approximation of bifurcation diagrams as well as the easy integration of partial physical information (``grey boxes'').
If the right-hand side of equations is fitted, it is feasible to utilize this for tasks other than integration, such as bifurcation calculations using the information included in this equation \cite{siettos2003enabling}.
When the right-hand side of equations is partially known (e.g., $\dd x/\dd t = f(x)+g(x)$ with known $f$ and unknown $g$), the known component can be incorporated into the framework and only the unknown part is left to learn, yielding a natural grey box \cite{rico1994continuous,lovelett2019partial}.
Using fixed RK coefficients from classical Runge-Kutta schemes, the nonlinear vector fields ``$\f$'' can be approximated as neural network modules embedded in the surrounding architecture described by the RK iterations. 
This approach allows for more accurate approximation of the unknown vector field when the trajectory data comes from an underlying continuous-time system, e.g., \cite{rico1993continuous, gonzalez1998identification, wang1998runge}. The resulting approximation error is even related to the specific integration scheme that provides the surrounding architecture \cite{zhu2020inverse}. Consequently, other integrators than RK have also successfully been used to promote the discovery of dynamics, e.g., the trapezoidal rule integrator \cite{rico1995nonlinear, anderson1996comparison}.
We emphasize here that from the learning viewpoint, all these methods have a strong resemblance with the popular residual network (ResNet) \cite{he2016deep}. ODE integrators compute one-step predictions through perturbations of the identity mapping depending on the vector field. If the vector field $f$ is learned from data, this can be considered as a residual network. For example, when we use Euler method to discretize $\dd x/\dd t = f(x)$, one-step prediction from $t_{n}$ to $t_{n+1}=t_{n}+h$ is $x_{n+1} = x_{n} + h f(x_{n})$, which corresponds to the original ResNet structure. Our RK-NN approach can be viewed as a residual network templated on the RK scheme. While ResNet is introduced to address gradient vanishing during training through shortcut connections, RK-NN (when used to learn vector fields defining the dynamics) is designed to recover faithful representation of the continuous time dynamics.
While identifying continuous-time models using the structure of known integrators, \cref{fig:f_nn}, is thus a prominent research area, we consider in this paper the opposite case: given a (family of) driving vector fields, we find the structure of the (optimal) integrator through parametrization as a composite neural network, \cref{fig:integrator_nn}.
Recently, integrator-embedded neural network models have gained a lot of attention as it was demonstrated also that implicit high-order RK methods \cite{raissi2017physicspart2} and arbitrary differential equation solvers can be stacked onto a neural network, extending the applicability of the approach even to tasks beyond the discovery of vector fields.
Related work in this direction include the continuous-time interpretation of residual networks \cite{weinan2017proposal},
its connections to optimal control \cite{Haber2017,Li2017,li2018optimal},
and the now popular neural ODEs \cite{chen2018neuralODE} with applications in generative modelling.

\begin{figure}[htb!]
\centering
	\begin{subfigure}{0.49\textwidth}
		\centering
		\includegraphics[width=1.0\linewidth]{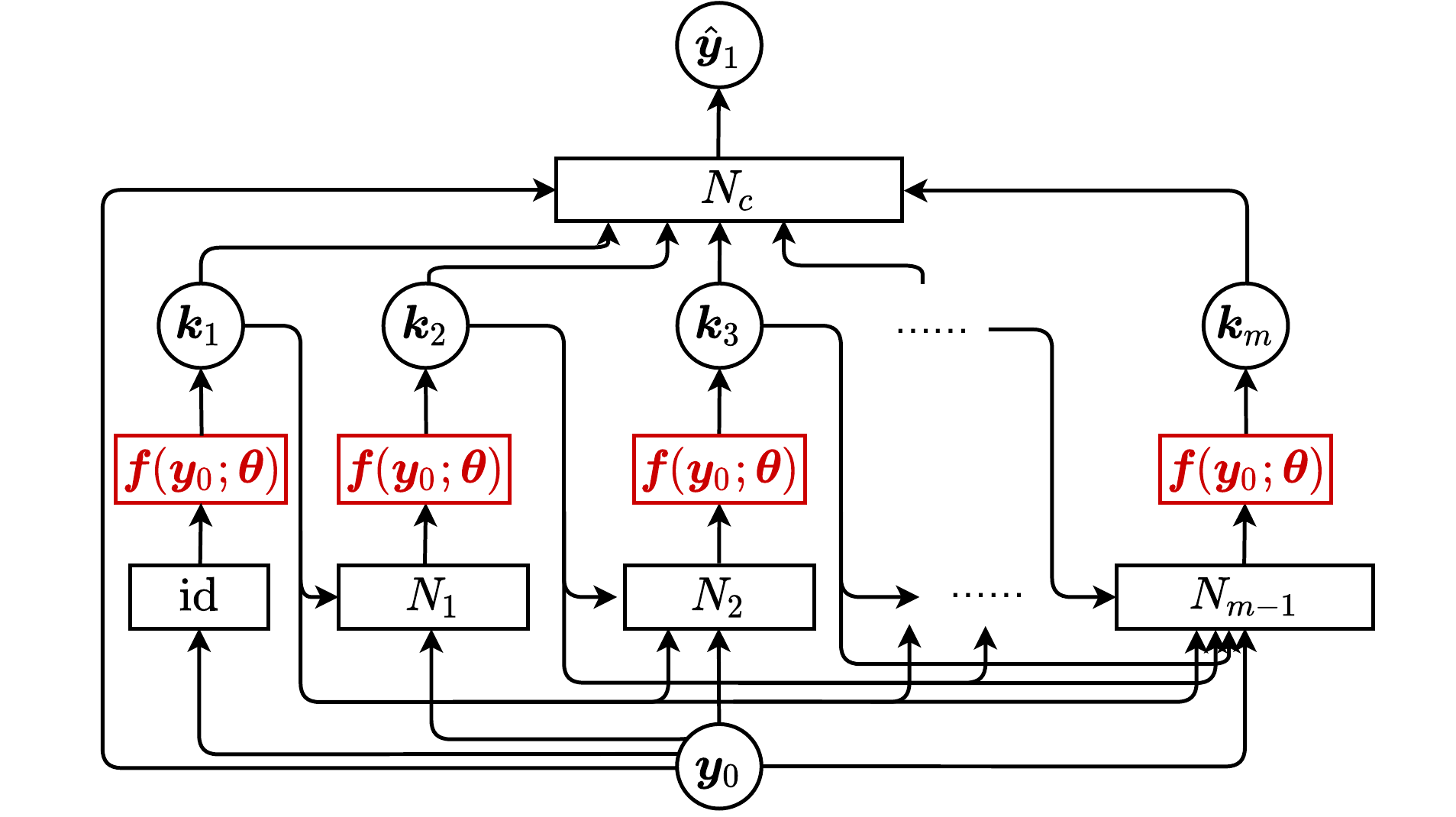}
		\caption{Learning vector field using embedded RK integrator with Neural Net $\boldsymbol{f}(\boldsymbol{y}_0;
\boldsymbol{ \theta})$.}
		\label{fig:f_nn}
	\end{subfigure}
	\begin{subfigure}{0.49\textwidth}
		\centering
		\includegraphics[width=1.0\linewidth]{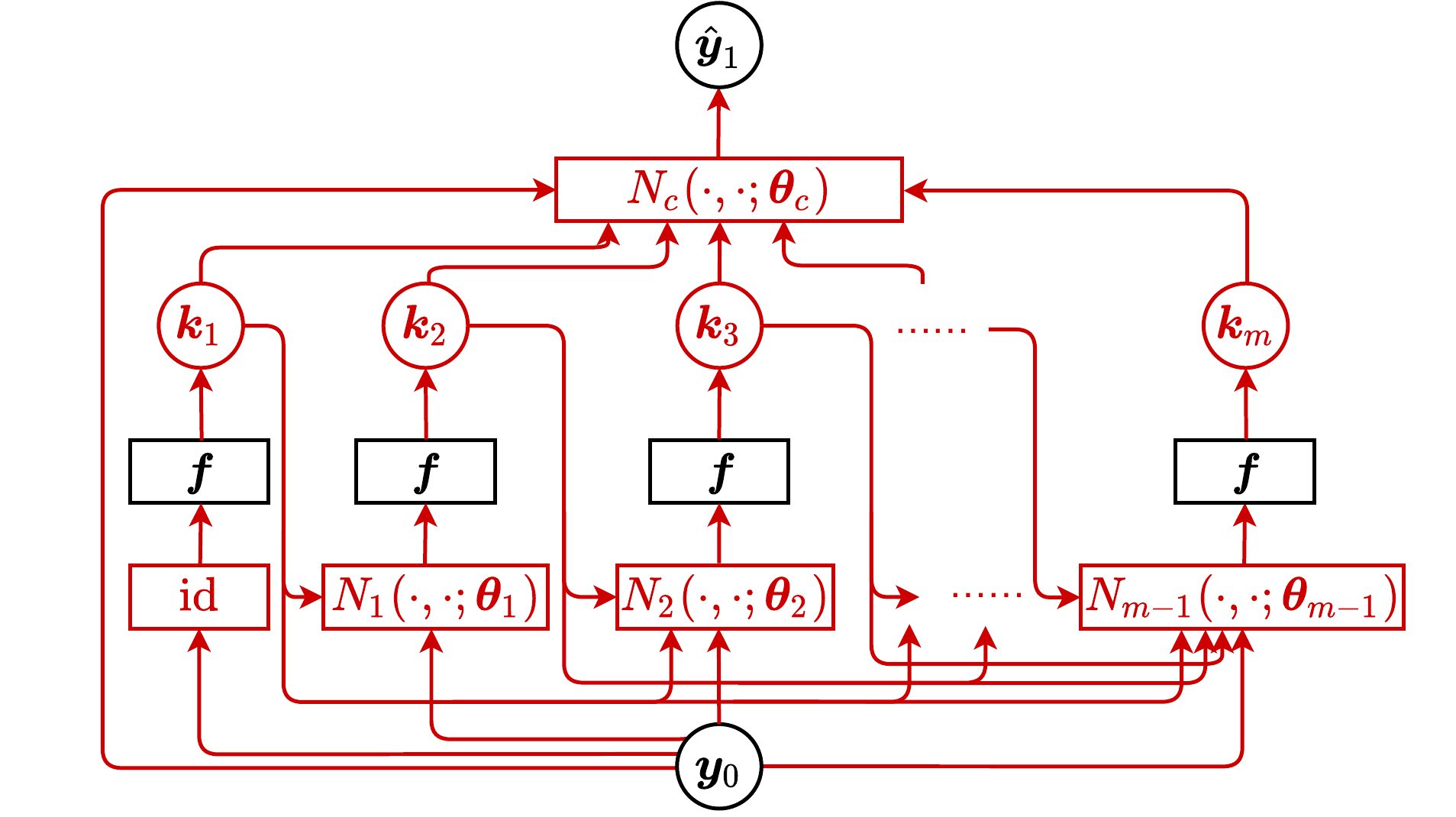}
		\caption{Learning the integrator parameterized by general form of RK method.}
		\label{fig:integrator_nn}
	\end{subfigure}
	\caption{Schematic representation of the implementation of RK method. Highlight is approximated by neural networks. The vector field is trainable in \cref{fig:f_nn} and the integrator is built by neural networks in \cref{fig:integrator_nn}. Here $\hat{\y}_{1}$ is the prediction by RK integrator with initial condition $\y_{0}$. $N_{c}$ and each $N_{i}$ describe weighted summation.}
	\label{fig:comparison}
\end{figure}

In this reverse direction of designing integrator structures, there are a number of previous related studies have addressed this problem setting \cite{Fehlberg-1964,Fehlberg-1966,Fehlberg-1969,fehlberg1970klassische,fehlberg1975klassische,suli2003introduction}.
In these works, the authors numerically compute RK coefficients from analytically derived constraints.
With the combination of neural networks,
the parameters in integrator structures can be learned to meet specific requirements.
To learn the coefficients in RK4, one constructs the loss function by the scaled distance between true and approximated value \cite{tsitouras2002neural}.
\cite{anastassi2014constructing} and \cite{dehghanpour2015ann} are respectively concerned with the RK2 and RK3 method and augment the loss function to penalize deviation of the coefficients from the equations that are required for RK methods.
However, the present method differs in a number of ways.
First, we introduce a new loss term, based on Taylor series, which can automatically discover high-order integrators, without the need to manually derive weight constraints, as is done in \cite{iserles2009first}. This makes our method easy to implement and scalable to high orders of accuracy.
Second, having trajectory data from the true solution is not compulsory in our method, even though previous work always relies on true labels to calculate the error, since the Taylor series expansion is a local property and is entirely determined by the vector field $\f$ (See \cref{subsec:loss}).

Lastly, from the machine learning viewpoint, our approach aims to learn solvers that perform well on a target family of tasks, rather than one fixed task.
Our RK-NN method is a form of multi-task learning in a broader sense.
Multi-task learning focuses on learning several tasks simultaneously so that the knowledge contained in one task can be applied to other tasks \cite{crawshaw2020multi,zhang2017survey}.
However, it is worth noting that there are some important differences to the neural network architectures designed for multi-task learning \cite{crawshaw2020multi}.
The latter aims to find shared neural network feature extractors relevant
for the multiple pre-defined tasks,
each of which is accompanied by specific task-dependent final layers.
While we also consider multiple tasks in our architecture,
most importantly, in our setting the algorithm must generalize to new unseen tasks in the task family.
In terms of the generalization,
our problem setup is thus related to specific branches of meta-learning \cite{Thrun1998} that are concerned with finding hyperparameters, neural networks architectures, parameter initializations etc. suited for a family of tasks \cite{Hospedales2020metalearning,finn2017model,nichol2018first,rajeswaran2019meta}.
It is a worthwhile endeavor for future research to investigate how the extensive toolbox of meta-learning methods available for neural network architectures can be leveraged to integrate even further adaptation to problem instances into our approach.

\section{Summary and Outlook}
\label{sec:discussion}


In this paper, we study how to effectively combine machine learning and numerical
analysis ideas to arrive at efficient and adapted algorithms.
As a case study, we developed a method to automatically learn high-order integrators for specified ODE families, based on the RK algorithmic superstructure.
A key idea is the definition of a RK-like neural network architectural superstructure (RK-NN), together with a Taylor series based regularizer that ensures high order accuracy and adaptivity to the problem class.
Instead of computing the Butcher tableau, we focus on the performance of our model under multiple tasks following some task distribution. Based on that, we can sample and train RK-NN by minimizing the sum of the scaled least-squares error and the regularizer. 
In the average sense under this distribution, the method has superior performance to the classical RK method because it can exploit the structure that may be present.
We apply this method to various examples, including the Van der Pol oscillator and the Brusselator, where we demonstrate that RK-NN brings lower global truncation error than the classical RK method.
Overall, this represents a basic step towards a systematic investigation of learning-based approaches towards numerical algorithm design and adaptation.

Here, we considered minimizing the global errors among distinct integrators with respect to some fixed time step.
However, prior work has shown that numerical computation can be improved if we adapt the step size during the evaluation.
The order of accuracy of a numerical integration scheme is a convergence statement for vanishing step size.
In practice, and with finite step size, the actual errors of the numerical solution compared to the true solution can vary widely.
A typical approach to control this is to use a higher-order method in each iteration to estimate the local error. The step size will be reduced if this  estimated error is above a user-defined tolerance.
Seminal work on this idea was done by Romberg~\cite{Romberg-1955} in the context of integration.
Considering the explicit forward Euler method as an example, the adaptive step size method uses half of the current step size as a more accurate version. If the error estimate is below the tolerance, we consider the step successful and continue with the next step. Else, we adapt the step size through a specific equation.
A particular challenge for adaptive step size control is the increased computational cost due to the ``second" evaluation of the method in each iteration. To minimize the number of function evaluations, E. Fehlberg developes multiple versions of the Runge-Kutta type integrators and minimizes the coefficients in the scheme while simultaneously minimizing the error of the fourth-order scheme \cite{Fehlberg-1964,Fehlberg-1966,Fehlberg-1969,fehlberg1970klassische,fehlberg1975klassische}, leading to ``Runge-Kutta-Fehlberg''.
Dormand and Prince later develop a scheme that optimizes the coefficients for a 5th-order method. They also use the first-same-as-last (FSAL) property to reduce the number of function evaluations necessary for both a fourth and fifth order accurate scheme~\cite{dormandprince-1977,dormand-1980b}.
In particular, their method of combining six function evaluations to obtain results for a fourth- and fifth-order accurate RK scheme simultaneously has been implemented in the \textsc{ODE45} solver of MATLAB.
Inspired by these studies, we can design an algorithm based on the approach introduced in this paper to automatically learn the adaptive step size for improved robustness in the future.

Throughout this paper we have considered a ``fixed superstructure": RK-NNs with a fixed recurrent architecture, thus restricting the search space to RK integrators of a fixed stage and with fixed zero entries in the Butcher tableau.
In the future, we envision that more choices will be left to the optimizer, i.e., the number of RK stages, together with all entries in the corresponding Butcher tableau will be learned as part of a meta-learning procedure.
This in general includes implicit methods, as explored in \cite{anderson1996comparison}.
Such additional degrees of freedom would define a problem that encompasses the general family of RK-based algorithms. We have started to work on more general Krylov-inspired recurrent NN architectures.
In order to find an optimal algorithm within such a superstructure, {\it the neural network architecture and its parameters} need to be optimized jointly.
The use of global, mixed-integer optimization over superstructures in order to discover optimal algorithms has been advocated (and illustrated) in \cite{Mitsos2018algorithms}, where the meta-learning problem was formulated as an optimal control problem.
It is interesting to summarize the experience of these authors with their approach, which they called ``optimal algorithm generation" rather than ``meta-learning": Depending on the subclass of problems over which they optimized, they sometimes found standard algorithms (that are documented in textbooks); at other times they found algorithms unknown (to them), that, upon literature search, had been previously discovered; and upon occasion, they found algorithms for the problem subclass that ``made perfect sense", and that somebody could have discovered, but apparently had not yet been discovered and documented.
A major conclusion was that the optimal algorithm was extremely sensitive to the fine scale details of the particular problem subclass, s.t. algorithms both previously unknown and also general could not be found.
Our argument here is that this ``generality weakness" is in effect a ``personalized strength": that we will maybe have integrators (and more generally algorithms, and sometimes hardware computers, like D.E.Shaw's ANTON \cite{shaw2008anton}) tuned to a particular class of equations, with particular types/ranges of initial/boundary conditions, over particular scales. The recent paper by Brenner and co-workers \cite{bar2019learning} on ML-discovered, problem-dependent PDE discretizations also moves in the same overall direction. 
The more times a problem needs to be solved repeatedly, the more the effort for discovering a ``personalized" optimal algorithm is justified.

In the context of machine learning, one particular type of meta-learning, called neural architecture search (NAS) refers to a set of techniques for finding the optimal network architecture for a given task \cite{Hospedales2020metalearning}.
Beyond the search space, which is herein defined by the superstructure, NAS methods are categorized by their search strategy and their performance estimation strategy \cite{Elsken2019neural}. Current research is focused on search strategies based on reinforcement learning \cite{Zoph2017neural} and Bayesian optimization (e.g., \cite{Kandasamy2018neural,Liu2018progressive}), while evolutionary algorithms have been employed in NAS for decades (e.g., \cite{Angeline1994evolutionary}).
Pruning, i.e., deletion of neural network parameters based on some importance metric, can be interpreted as a widely used search strategy for NAS when the aim is to discover sparse model representations. Early pruning methods were based on Taylor series approximations of the sensitivity of the loss function with respect to network parameters \cite{LeCun1990optimal,Hassibi1993optimal}. For deep networks, pruning based on weight magnitude is an established approach (e.g., \cite{Han2016deeppruning,See2016compressionNLP}), but recent work also considers information criteria such as synaptic saliency to achieve the highest possible model sparsity \cite{Tanaka2020synaptic}.
To reduce the complexity of the search space NAS methods can further exploit modularity within the superstructure \cite{Zoph2018learning,Boecking2020modular,Negrinho2019modularprogrammable}. For instance, for the RK-NN, it is particularly useful to think of all layers involving a function call and all skip connections between these layers as prunable units.

Beyond NAS, meta-learning on a superstructure would facilitate optimizing algorithms with respect to a metric that depicts an optimal trade-off between the complexity and the accuracy of each algorithmic step. A prime example for such a metric is the wall-clock performance of the algorithm for achieving some pre-defined stopping criterion,
which is determined by the number of iterations and the cost per iteration
(see \cite{Mitsos2018algorithms})
Since the cost per iteration depends on the number and type of mathematical operations it involves, e.g., function evaluations, matrix-vector products or computations of a Jacobian, the corresponding superstructure optimization problem constitutes a mixed-integer nonlinear program (MINLP).
The NAS methods above are capable of finding approximate solutions to this problem due to their heuristic nature and are thus widely used.
In contrast, integer programming (IP) algorithms search the space of possible architectures in a more rigorous manner than NAS methods by constructing relaxations and introducing cutting planes.
However, IP algorithms exhibit prohibitive cost for larger networks and are thus not applied to training. One possible exception is \cite{ElAraby2020identifying}, in which the authors circumvent the full complexity of the discrete problem by evaluating neuron importance with a binary value and training a network to minimize the number of important neurons to achieve sparsity.
For a smaller network such as the one we used in this work, discrete optimization appears tractable. However, we point out that the theory of IP algorithms requires global solution of all relaxed subproblems with each subproblem corresponding to a neural network training problem. While training may lead to global optima empirically \cite{Goodfellow2014qualitatively}, and under some circumstances provably \cite{Haeffele2017global}, this observation does not hold for all neural networks \cite{Frankle2020revisiting}.
Therefore, to {\it guarantee} finding the best solution, deterministic global optimization is necessary; \cite{Mitsos2018algorithms} illustrate this. Neural network architectural superstructures allow for more flexible function bases and may be optimized deterministically in similar fashion by the algorithms for superstructure optimization given in \cite{Grossmann.2002} and \cite{Mencarelli2020superstructure}.
We do not envision (due to the tremendous computational difficulty of the problem) that mixed integer global optimization over superstructures will soon become the standard tool for optimal NN architecture generation for a ``personalized" computational task over a class of problems of interest.
More practical tools, e.g., exploiting modular structure, and informed pruning, will certainly carry the day in the foreseeable future.
Yet the superstructure formulation of the optimization problem, and crucially the construction of an intelligent and flexible superstructure, informed by calculus and traditional numerical analysis, appears to us a truly worthy research task in this meta-learning quest for personalized algorithm generation.

\bibliographystyle{siamplain}
\bibliography{library}

\begin{thebibliography}{10}

\bibitem{anastassi2014constructing}
{\sc A.~A. Anastassi}, {\em Constructing runge--kutta methods with the use of
  artificial neural networks}, Neural Computing and Applications, 25 (2014),
  pp.~229--236.

\bibitem{anderson1996comparison}
{\sc J.~Anderson, I.~Kevrekidis, and R.~Rico-Martinez}, {\em A comparison of
  recurrent training algorithms for time series analysis and system
  identification}, Computers \& chemical engineering, 20 (1996),
  pp.~S751--S756.

\bibitem{Angeline1994evolutionary}
{\sc P.~J. {Angeline}, G.~M. {Saunders}, and J.~B. {Pollack}}, {\em An
  evolutionary algorithm that constructs recurrent neural networks}, IEEE
  Transactions on Neural Networks, 5 (1994), pp.~54--65,
  \url{https://doi.org/10.1109/72.265960}.

\bibitem{bar2019learning}
{\sc Y.~Bar-Sinai, S.~Hoyer, J.~Hickey, and M.~P. Brenner}, {\em Learning
  data-driven discretizations for partial differential equations}, Proceedings
  of the National Academy of Sciences, 116 (2019), pp.~15344--15349.

\bibitem{Boecking2020modular}
{\sc L.~Boecking, P.~Philipp, and C.~Kulbach}, {\em Towards modular neural
  architecture search}, ICLR Workshop on Neural Architecture Search,  (2020),
  \url{https://drive.google.com/file/d/1SLBnJe_X9erS3E0PytxK1ztPAEV37611/view}.

\bibitem{butcher2016numerical}
{\sc J.~C. Butcher}, {\em Numerical methods for ordinary differential
  equations}, John Wiley \& Sons, 2016.

\bibitem{chen2018neuralODE}
{\sc R.~T. Chen, Y.~Rubanova, J.~Bettencourt, and D.~Duvenaud}, {\em Neural
  ordinary differential equations}, arXiv preprint arXiv:1806.07366,  (2018).

\bibitem{chen1995universal}
{\sc T.~Chen and H.~Chen}, {\em Universal approximation to nonlinear operators
  by neural networks with arbitrary activation functions and its application to
  dynamical systems}, IEEE Transactions on Neural Networks, 6 (1995),
  pp.~911--917.

\bibitem{crawshaw2020multi}
{\sc M.~Crawshaw}, {\em Multi-task learning with deep neural networks: A
  survey}, arXiv preprint arXiv:2009.09796,  (2020).

\bibitem{dehghanpour2015ann}
{\sc M.~Dehghanpour, A.~Rahati, and E.~Dehghanian}, {\em Ann-based modeling of
  third order runge kutta method}, Journal of Advanced Computer Science \&
  Technology, 4 (2015), pp.~180--189.

\bibitem{dormand-1980b}
{\sc J.~Dormand and P.~Prince}, {\em A family of embedded {Runge-Kutta}
  formulae}, Journal of Computational and Applied Mathematics, 6 (1980),
  pp.~19--26, \url{https://doi.org/10.1016/0771-050X(80)90013-3}.

\bibitem{weinan2017proposal}
{\sc W.~E}, {\em A {{Proposal}} on {{Machine Learning}} via {{Dynamical
  Systems}}}, Communications in Mathematics and Statistics, 5 (2017),
  pp.~1--11.

\bibitem{ElAraby2020identifying}
{\sc M.~ElAraby, G.~Wolf, and M.~Carvalho}, {\em Identifying critical neurons
  in ann architectures using mixed integer programming}, 2020,
  \url{https://arxiv.org/abs/2002.07259}.

\bibitem{Elsken2019neural}
{\sc T.~Elsken, J.~H. Metzen, and F.~Hutter}, {\em Neural architecture search:
  A survey}, Journal of Machine Learning Research, 20 (2019), pp.~1--21,
  \url{http://jmlr.org/papers/v20/18-598.html}.

\bibitem{Fehlberg-1964}
{\sc E.~Fehlberg}, {\em New high-order {Runge-Kutta} formulas with step size
  control for systems of first-and second-order differential equations}, {ZAMM}
  - Journal of Applied Mathematics and Mechanics / Zeitschrift f\"{u}r
  Angewandte Mathematik und Mechanik, 44 (1964),
  \url{https://doi.org/10.1002/zamm.19640441310}.

\bibitem{Fehlberg-1966}
{\sc E.~Fehlberg}, {\em New high-order {Runge-Kutta} formulas with an
  arbitrarily small truncation error}, {ZAMM} - Zeitschrift f\"{u}r Angewandte
  Mathematik und Mechanik, 46 (1966), pp.~1--16,
  \url{https://doi.org/10.1002/zamm.19660460102}.

\bibitem{Fehlberg-1969}
{\sc E.~Fehlberg}, {\em Low-order classical {Runge-Kutta} formulas with step
  size control and their application to some heat transfer problems}, tech.
  report, NASA, 1969.

\bibitem{fehlberg1970klassische}
{\sc E.~Fehlberg}, {\em Klassische runge-kutta-formeln vierter und niedrigerer
  ordnung mit schrittweiten-kontrolle und ihre anwendung auf
  waermeleitungsprobleme}, Computing, 6 (1970), pp.~61--71.

\bibitem{fehlberg1975klassische}
{\sc E.~Fehlberg}, {\em Klassische runge-kutta-nystr{\"o}m-formeln mit
  schrittweiten-kontrolle f{\"u}r differentialgleichungen $$$\backslash$ddot x=
  f (t, x,$\backslash$dot x) $$}, Computing, 14 (1975), pp.~371--387.

\bibitem{finn2017model}
{\sc C.~Finn, P.~Abbeel, and S.~Levine}, {\em Model-agnostic meta-learning for
  fast adaptation of deep networks}, arXiv preprint arXiv:1703.03400,  (2017).

\bibitem{Frankle2020revisiting}
{\sc J.~Frankle}, {\em Revisiting "qualitatively characterizing neural network
  optimization problems"}, 2020, \url{https://arxiv.org/abs/2012.06898}.

\bibitem{gonzalez1998identification}
{\sc R.~Gonzalez-Garcia, R.~Rico-Martinez, and I.~Kevrekidis}, {\em
  Identification of distributed parameter systems: A neural net based
  approach}, Computers \& chemical engineering, 22 (1998), pp.~S965--S968.

\bibitem{Goodfellow2014qualitatively}
{\sc I.~J. Goodfellow, O.~Vinyals, and A.~M. Saxe}, {\em Qualitatively
  characterizing neural network optimization problems}, arXiv preprint
  arXiv:1412.6544,  (2014).

\bibitem{Grossmann.2002}
{\sc I.~E. Grossmann}, {\em Review of nonlinear mixed-integer and disjunctive
  programming techniques}, Optimization and Engineering, 3 (2002),
  pp.~227--252, \url{https://doi.org/10.1023/A:1021039126272}.

\bibitem{Haber2017}
{\sc E.~Haber and L.~Ruthotto}, {\em Stable architectures for deep neural
  networks}, Inverse Problems, 34 (2017), p.~14004.

\bibitem{Haeffele2017global}
{\sc B.~D. {Haeffele} and R.~{Vidal}}, {\em Global optimality in neural network
  training}, in 2017 IEEE Conference on Computer Vision and Pattern Recognition
  (CVPR), 2017, pp.~4390--4398, \url{https://doi.org/10.1109/CVPR.2017.467}.

\bibitem{Han2016deeppruning}
{\sc S.~Han, H.~Mao, and W.~J. Dally}, {\em Deep compression: Compressing deep
  neural network with pruning, trained quantization and huffman coding}, in 4th
  International Conference on Learning Representations, {ICLR} 2016, San Juan,
  Puerto Rico, May 2-4, 2016, Conference Track Proceedings, Y.~Bengio and
  Y.~LeCun, eds., 2016, \url{http://arxiv.org/abs/1510.00149}.

\bibitem{Hassibi1993optimal}
{\sc B.~Hassibi and D.~Stork}, {\em Second order derivatives for network
  pruning: Optimal brain surgeon}, in Advances in Neural Information Processing
  Systems, S.~Hanson, J.~Cowan, and C.~Giles, eds., vol.~5, Morgan-Kaufmann,
  1993,
  \url{https://proceedings.neurips.cc/paper/1992/file/303ed4c69846ab36c2904d3ba8573050-Paper.pdf}.

\bibitem{he2016deep}
{\sc K.~He, X.~Zhang, S.~Ren, and J.~Sun}, {\em Deep residual learning for
  image recognition}, in Proceedings of the IEEE conference on computer vision
  and pattern recognition, 2016, pp.~770--778.

\bibitem{Hospedales2020metalearning}
{\sc T.~Hospedales, A.~Antoniou, P.~Micaelli, and A.~Storkey}, {\em
  Meta-learning in neural networks: A survey}, 2020,
  \url{https://arxiv.org/abs/2004.05439}.

\bibitem{hudson1990nonlinear}
{\sc J.~Hudson, M.~Kube, R.~Adomaitis, I.~Kevrekidis, A.~Lapedes, and
  R.~Farber}, {\em Nonlinear signal processing and system identification:
  applications to time series from electrochemical reactions}, Chemical
  Engineering Science, 45 (1990), pp.~2075--2081.

\bibitem{iserles2009first}
{\sc A.~Iserles}, {\em A first course in the numerical analysis of differential
  equations}, no.~44, Cambridge university press, 2009.

\bibitem{dormandprince-1977}
{\sc P.~J.~P. J.~R.~Dormand}, {\em New {Runge-Kutta} algorithms for numerical
  simulation in dynamical astronomy}, Celestial mechanics and Dynamical
  Astronomy, 18 (1978), pp.~223--232.

\bibitem{Kandasamy2018neural}
{\sc K.~Kandasamy, W.~Neiswanger, J.~Schneider, B.~P\'{o}czos, and E.~P. Xing},
  {\em Neural architecture search with bayesian optimisation and optimal
  transport}, in Proceedings of the 32nd International Conference on Neural
  Information Processing Systems, NIPS'18, Red Hook, NY, USA, 2018, Curran
  Associates Inc.

\bibitem{kingma2014adam}
{\sc D.~P. Kingma and J.~Ba}, {\em Adam: A method for stochastic optimization},
  arXiv preprint arXiv:1412.6980,  (2014).

\bibitem{krischer1993model}
{\sc K.~Krischer, R.~Rico-Mart{\'\i}nez, I.~Kevrekidis, H.~Rotermund, G.~Ertl,
  and J.~Hudson}, {\em Model identification of a spatiotemporally varying
  catalytic reaction}, AIChE Journal, 39 (1993), pp.~89--98.

\bibitem{lapedes1988neural}
{\sc A.~Lapedes and R.~Farber}, {\em How neural nets work}, in Evolution,
  learning and cognition, World Scientific, 1988, pp.~331--346.

\bibitem{LeCun1990optimal}
{\sc Y.~LeCun, J.~Denker, and S.~Solla}, {\em Optimal brain damage}, in
  Advances in Neural Information Processing Systems, D.~Touretzky, ed., vol.~2,
  Morgan-Kaufmann, 1990,
  \url{https://proceedings.neurips.cc/paper/1989/file/6c9882bbac1c7093bd25041881277658-Paper.pdf}.

\bibitem{Li2017}
{\sc Q.~Li, L.~Chen, C.~Tai, and W.~E}, {\em Maximum principle based algorithms
  for deep learning}, The Journal of Machine Learning Research, 18 (2017),
  pp.~5998--6026.

\bibitem{li2018optimal}
{\sc Q.~Li and S.~Hao}, {\em An {{Optimal Control Approach}} to {{Deep
  Learning}} and {{Applications}} to {{Discrete}}-{{Weight Neural Networks}}},
  in Proceedings of the 35th International Conference on Machine Learning,
  vol.~80, 2018, pp.~2985--2994.

\bibitem{li2017stochastic}
{\sc Q.~Li, C.~Tai, and E.~Weinan}, {\em Stochastic modified equations and
  adaptive stochastic gradient algorithms}, in International Conference on
  Machine Learning, PMLR, 2017, pp.~2101--2110.

\bibitem{Liu2018progressive}
{\sc C.~Liu, B.~Zoph, M.~Neumann, J.~Shlens, W.~Hua, L.-J. Li, L.~Fei-Fei,
  A.~Yuille, J.~Huang, and K.~Murphy}, {\em Progressive neural architecture
  search}, in Computer Vision -- ECCV 2018, V.~Ferrari, M.~Hebert,
  C.~Sminchisescu, and Y.~Weiss, eds., Cham, 2018, Springer International
  Publishing, pp.~19--35.

\bibitem{lovelett2019partial}
{\sc R.~J. Lovelett, J.~L. Avalos, and I.~G. Kevrekidis}, {\em Partial
  observations and conservation laws: Gray-box modeling in biotechnology and
  optogenetics}, Industrial \& Engineering Chemistry Research, 59 (2019),
  pp.~2611--2620.

\bibitem{lu2019deeponet}
{\sc L.~Lu, P.~Jin, and G.~E. Karniadakis}, {\em Deeponet: Learning nonlinear
  operators for identifying differential equations based on the universal
  approximation theorem of operators}, arXiv preprint arXiv:1910.03193,
  (2019).

\bibitem{Mencarelli2020superstructure}
{\sc L.~Mencarelli, Q.~Chen, A.~Pagot, and I.~E. Grossmann}, {\em A review on
  superstructure optimization approaches in process system engineering},
  Computers \& Chemical Engineering, 136 (2020), p.~106808,
  \url{https://doi.org/https://doi.org/10.1016/j.compchemeng.2020.106808},
  \url{https://www.sciencedirect.com/science/article/pii/S0098135419313924}.

\bibitem{Mitsos2018algorithms}
{\sc A.~Mitsos, J.~Najman, and I.~G. Kevrekidis}, {\em Optimal deterministic
  algorithm generation}, Journal of Global Optimization, 71 (2018),
  pp.~891--913, \url{https://doi.org/10.1007/s10898-018-0611-8}.

\bibitem{Negrinho2019modularprogrammable}
{\sc R.~Negrinho, M.~Gormley, G.~J. Gordon, D.~Patil, N.~Le, and D.~Ferreira},
  {\em Towards modular and programmable architecture search}, in Advances in
  Neural Information Processing Systems, H.~Wallach, H.~Larochelle,
  A.~Beygelzimer, F.~d'Alch\'{e} Buc, E.~Fox, and R.~Garnett, eds., vol.~32,
  Curran Associates, Inc., 2019,
  \url{https://proceedings.neurips.cc/paper/2019/file/4ab50afd6dcc95fcba76d0fe04295632-Paper.pdf}.

\bibitem{nichol2018first}
{\sc A.~Nichol, J.~Achiam, and J.~Schulman}, {\em On first-order meta-learning
  algorithms}, arXiv preprint arXiv:1803.02999,  (2018).

\bibitem{raissi2017physicspart2}
{\sc M.~Raissi, P.~Perdikaris, and G.~Karniadakis}, {\em Physics informed deep
  learning (part ii): Data-driven, discovery of nonlinear partial differential
  equations,”, arxiv e-prints, p}, arXiv preprint arXiv:1711.10566,  (2017).

\bibitem{raissi2017physics}
{\sc M.~Raissi, P.~Perdikaris, and G.~E. Karniadakis}, {\em Physics informed
  deep learning (part i): Data-driven solutions of nonlinear partial
  differential equations}, arXiv preprint arXiv:1711.10561,  (2017).

\bibitem{rajeswaran2019meta}
{\sc A.~Rajeswaran, C.~Finn, S.~Kakade, and S.~Levine}, {\em Meta-learning with
  implicit gradients}, arXiv preprint arXiv:1909.04630,  (2019).

\bibitem{rico1994continuous}
{\sc R.~Rico-Martinez, J.~Anderson, and I.~Kevrekidis}, {\em Continuous-time
  nonlinear signal processing: a neural network based approach for gray box
  identification}, in Proceedings of IEEE Workshop on Neural Networks for
  Signal Processing, IEEE, 1994, pp.~596--605.

\bibitem{rico1995nonlinear}
{\sc R.~Rico-Martinez, I.~Kevrekidis, and K.~Krischer}, {\em Nonlinear system
  identification using neural networks: dynamics and instabilities}, Neural
  networks for chemical engineers,  (1995), pp.~409--442.

\bibitem{rico1993continuous}
{\sc R.~Rico-Martinez and I.~G. Kevrekidis}, {\em Continuous time modeling of
  nonlinear systems: A neural network-based approach}, in IEEE International
  Conference on Neural Networks, IEEE, 1993, pp.~1522--1525.

\bibitem{rico1992discrete}
{\sc R.~Rico-Martinez, K.~Krischer, I.~Kevrekidis, M.~Kube, and J.~Hudson},
  {\em Discrete-vs. continuous-time nonlinear signal processing of cu
  electrodissolution data}, Chemical Engineering Communications, 118 (1992),
  pp.~25--48.

\bibitem{Romberg-1955}
{\sc W.~Romberg}, {\em {Vereinfachte numerische Integration}},  (1955),
  pp.~30--36.

\bibitem{See2016compressionNLP}
{\sc A.~See, M.-T. Luong, and C.~D. Manning}, {\em Compression of neural
  machine translation models via pruning}, in Proceedings of The 20th {SIGNLL}
  Conference on Computational Natural Language Learning, Berlin, Germany, Aug.
  2016, Association for Computational Linguistics, pp.~291--301,
  \url{https://doi.org/10.18653/v1/K16-1029},
  \url{https://www.aclweb.org/anthology/K16-1029}.

\bibitem{shaw2008anton}
{\sc D.~E. Shaw, M.~M. Deneroff, R.~O. Dror, J.~S. Kuskin, R.~H. Larson, J.~K.
  Salmon, C.~Young, B.~Batson, K.~J. Bowers, J.~C. Chao, et~al.}, {\em Anton, a
  special-purpose machine for molecular dynamics simulation}, Communications of
  the ACM, 51 (2008), pp.~91--97.

\bibitem{siettos2003enabling}
{\sc C.~Siettos, C.~Pantelides, and I.~Kevrekidis}, {\em Enabling dynamic
  process simulators to perform alternative tasks: A time-stepper-based toolkit
  for computer-aided analysis}, Industrial \& engineering chemistry research,
  42 (2003), pp.~6795--6801.

\bibitem{suli2003introduction}
{\sc E.~S{\"u}li and D.~F. Mayers}, {\em An introduction to numerical
  analysis}, Cambridge university press, 2003.

\bibitem{Tanaka2020synaptic}
{\sc H.~Tanaka, D.~Kunin, D.~L. Yamins, and S.~Ganguli}, {\em Pruning neural
  networks without any data by iteratively conserving synaptic flow}, in
  Advances in Neural Information Processing Systems, H.~Larochelle, M.~Ranzato,
  R.~Hadsell, M.~F. Balcan, and H.~Lin, eds., vol.~33, Curran Associates, Inc.,
  2020, pp.~6377--6389,
  \url{https://proceedings.neurips.cc/paper/2020/file/46a4378f835dc8040c8057beb6a2da52-Paper.pdf}.

\bibitem{Thrun1998}
{\sc S.~Thrun and L.~Pratt}, {\em Learning to Learn: Introduction and
  Overview}, Springer US, Boston, MA, 1998, pp.~3--17,
  \url{https://doi.org/10.1007/978-1-4615-5529-2_1},
  \url{https://doi.org/10.1007/978-1-4615-5529-2_1}.

\bibitem{traub1988information}
{\sc J.~Traub, G.~Wasilkowski, and H.~Wozniakowski}, {\em Information-based
  complexity", acad. press}, INC., New York,  (1988).

\bibitem{traub1980general}
{\sc J.~F. Traub}, {\em A general theory of optimal algorithms}, tech. report,
  1980.

\bibitem{tsitouras2002neural}
{\sc C.~Tsitouras}, {\em Neural networks with multidimensional transfer
  functions}, IEEE transactions on neural networks, 13 (2002), pp.~222--228.

\bibitem{wang1998runge}
{\sc Y.-J. Wang and C.-T. Lin}, {\em Runge-kutta neural network for
  identification of dynamical systems in high accuracy}, IEEE Transactions on
  Neural Networks, 9 (1998), pp.~294--307.

\bibitem{zhang2017survey}
{\sc Y.~Zhang and Q.~Yang}, {\em A survey on multi-task learning}, arXiv
  preprint arXiv:1707.08114,  (2017).

\bibitem{zhu2020inverse}
{\sc A.~Zhu, P.~Jin, and Y.~Tang}, {\em Inverse modified differential equations
  for discovery of dynamics}, arXiv preprint arXiv:2009.01058,  (2020).

\bibitem{Zoph2017neural}
{\sc B.~Zoph and Q.~V. Le}, {\em Neural architecture search with reinforcement
  learning}, 2017, \url{https://arxiv.org/abs/1611.01578}.

\bibitem{Zoph2018learning}
{\sc B.~Zoph, V.~Vasudevan, J.~Shlens, and Q.~V. Le}, {\em Learning
  transferable architectures for scalable image recognition}, in Proceedings of
  the IEEE conference on computer vision and pattern recognition, 2018,
  pp.~8697--8710.

\end{thebibliography}

\newpage
\centerline{\textrm{\textbf{SUPPLEMENTARY MATERIALS}}}

We introduce some additional results in these supplementary materials.
In \textbf{SM1},
a four-stage RK-NN integrator is trained to sixth-order accuracy over a range of $h$ in a specific task family, which breaks the order barrier of the classic RK method.
\textbf{SM2} illustrates worse performance on the Brusselator is obtained when initial condition $\y_{0}$ or the equation parameter $b$ is out of the training range. We also evaluate both the Van der Pol oscillator and the Brusselator to see what happens when time step $h$ is out of training range in 
\textbf{SM3}.
In 
\textbf{SM4}, RK-NN is compared to three classic RK3 methods with different parameterization.
Finally, we show the training complexity of a linear task by plotting the wall-clock time required for one epoch in
\textbf{SM5}.

\section*{SM1. Additional outperforming experiment results}
\label{RK4_order_6}
We apply the same approach (Alg.1) to train a superior four-stage RK-NN with $\alpha=6$ and $m=4$.
It is well known that one requires a seven-stage RK method to obtain sixth-order accuracy for generic ODEs. 
~\Cref{fig:rk4_order6} shows that we can obtain a sixth-order integrator with four-stage RK-NN for square task family \cref{square_target} (detailed definition is illustrated in section 4 of the main article).
\begin{align}
\label{square_target}
\begin{split}
    \Fcal &= 
    \{ \y \mapsto -a\y^2 \mid a > 0\} \times \{ \R \},\\
    \mu &= \mathrm{Distribution}(\{\y \mapsto -a\y^2; a \sim U(0.1, 0.5)\}) \times U(1, 3).
\end{split}
\end{align}

\begin{figure}[htb!]
	\centering	
	\includegraphics[width=0.6\textwidth]{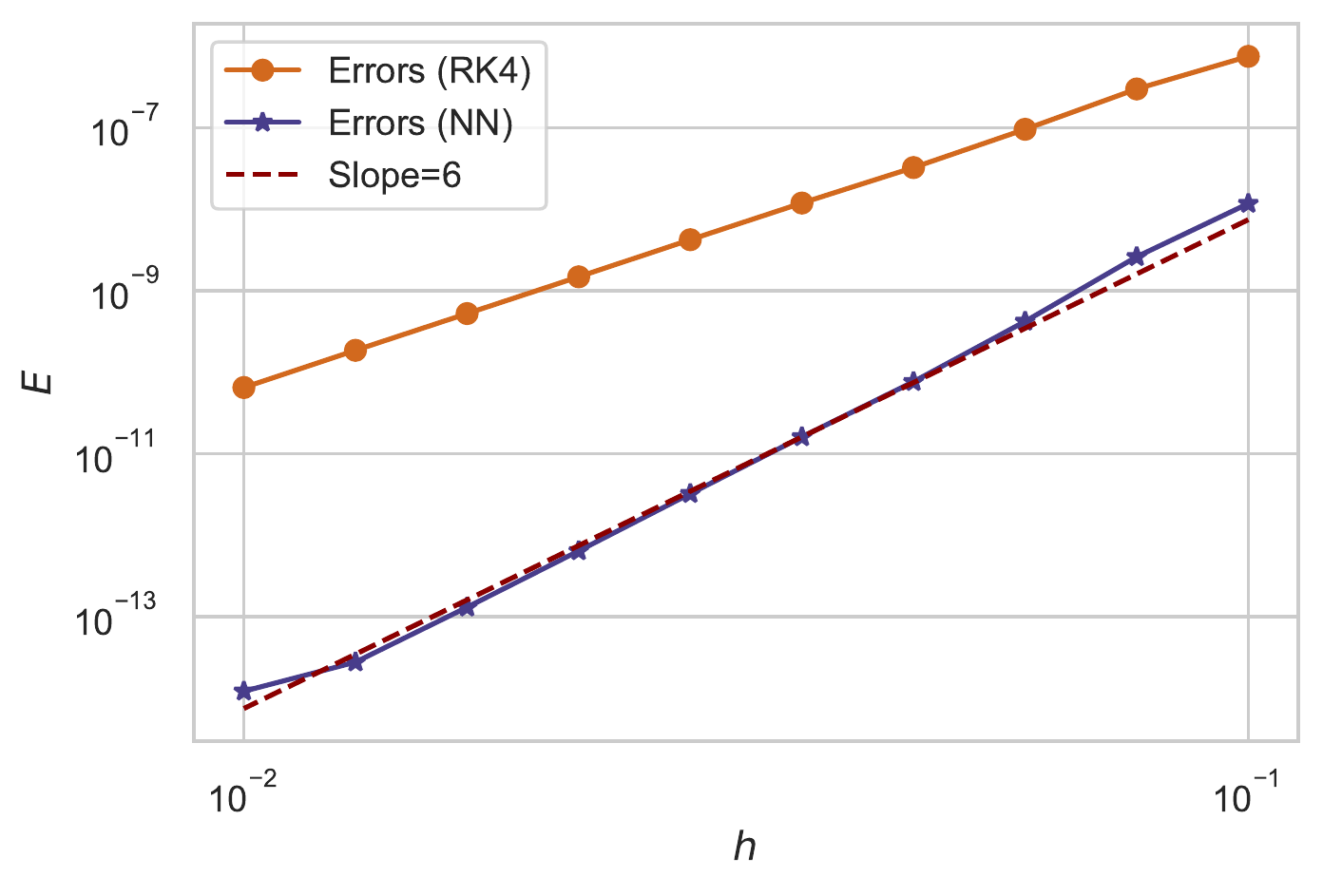}
	\caption{Error analysis on \textbf{square task} family, after training and testing on $h\in (0.01,0.1)$, using \textbf{four-stage} RK-NN integrator with \textbf{sixth-order} Taylor-based loss as regularizer.}
	\label{fig:rk4_order6}
\end{figure}

\section*{SM2. Error analysis on the Brusselator with initial conditions or parameters outside training range}
\label{bru_outside}
Training RK-NN integrator on the Brusselator families with $b \sim U(0.5,2)$ and $a=1$, we use $\{\y_{0}=(u_{0},v_{0}); u_{0} \sim U(1.5, 3), v_{0} \sim U(2, 3)\}$ as initial condition.
We test on the initial condition $u_{0} \in (0.5,1), v_{0} \in (1,2)$ or parameter values $b \in (3.5, 4)$.
Evaluation results are shown in \cref{fig:bru_outside}.

\begin{figure}[htb!]
\centering
	\begin{subfigure}{0.49\textwidth}
		\centering
		\includegraphics[width=1.0\linewidth]{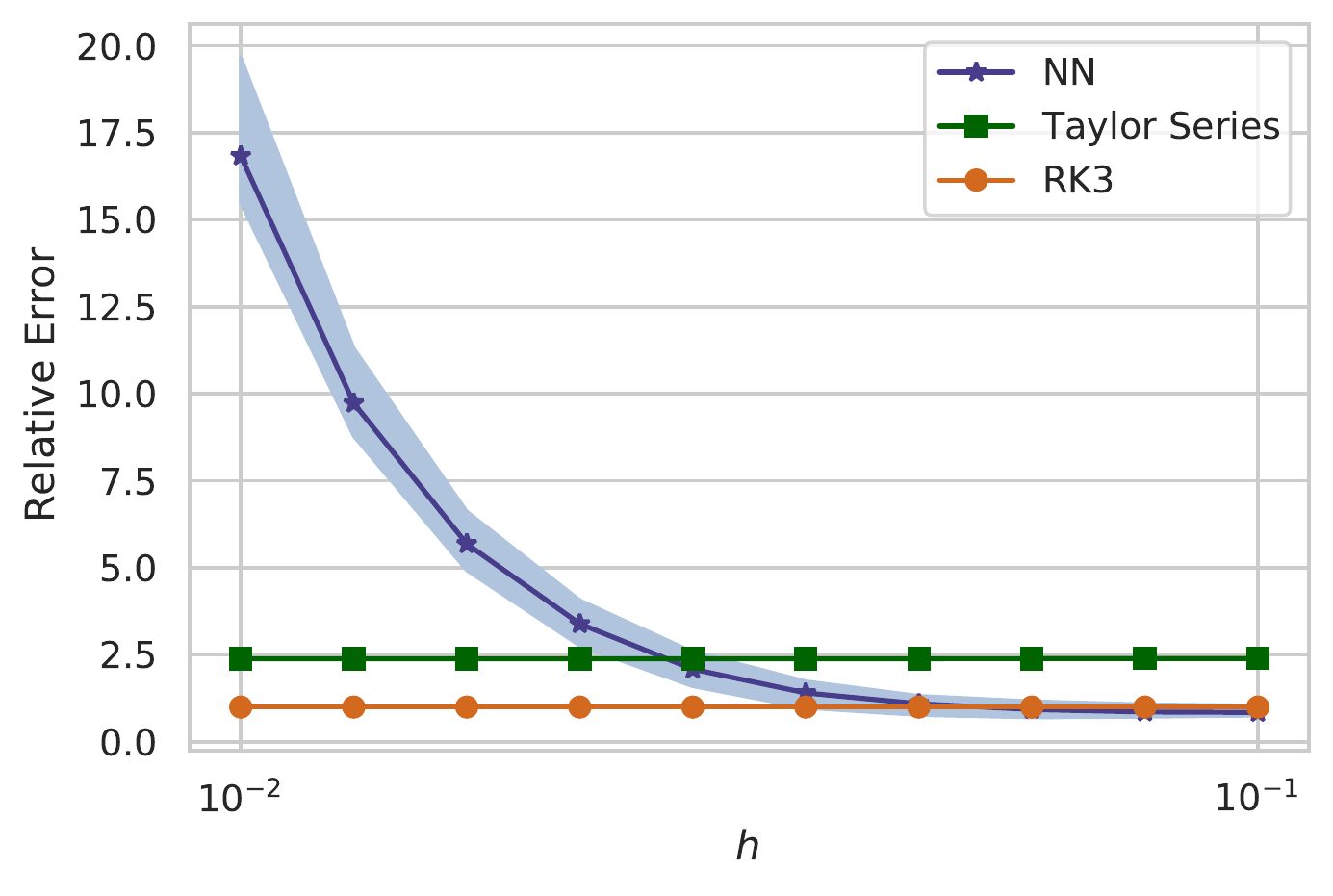}
		\caption{$u_{0} \in (0.5,1), v_{0} \in (1,2)$.}
	\end{subfigure}
	\begin{subfigure}{0.49\textwidth}
		\centering
		\includegraphics[width=1.0\linewidth]{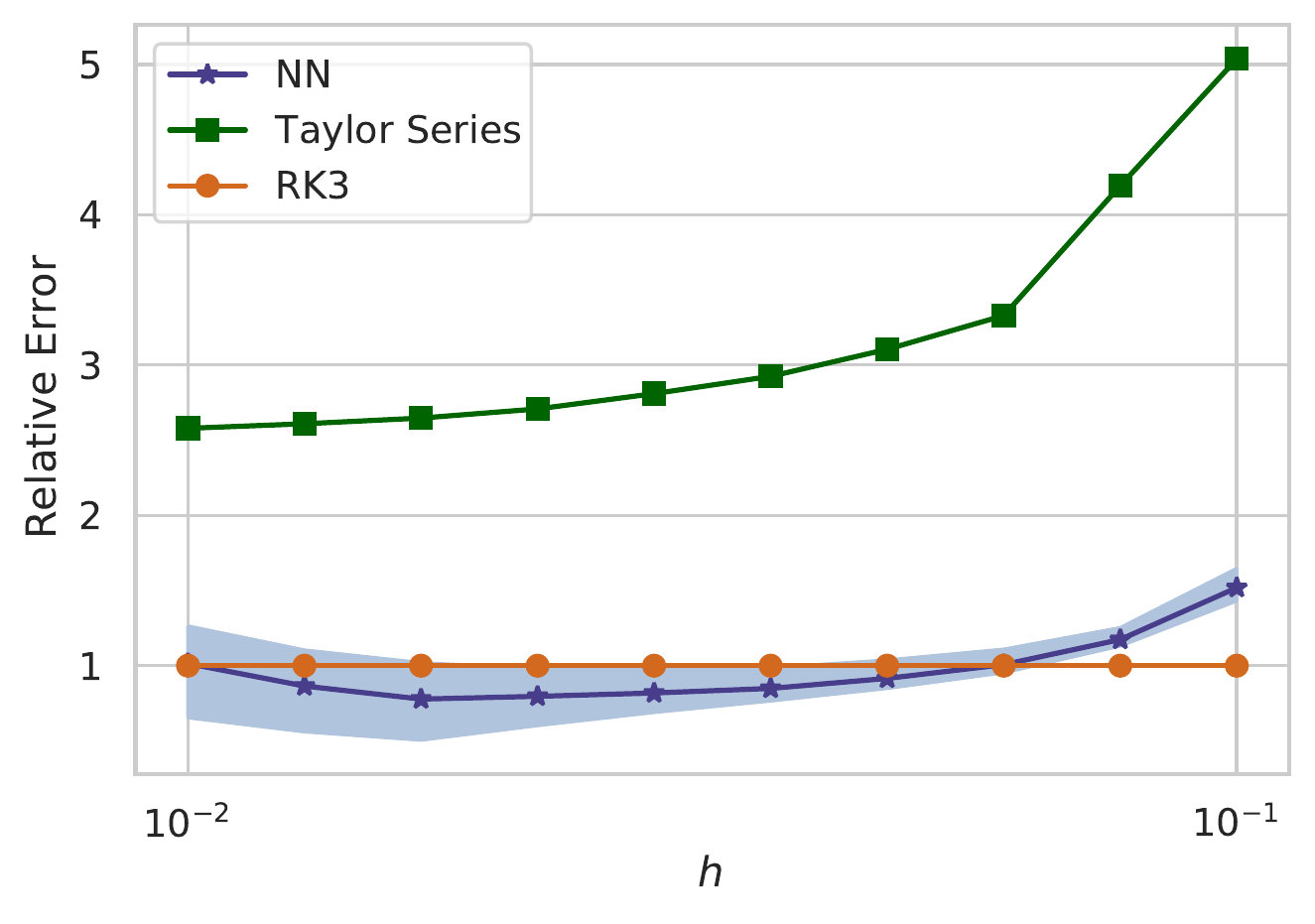}
		\caption{$b \in (3.5, 4)$.}
	\end{subfigure}
	\caption{Evaluation on the Brusselator by using inputs outside the training range.}
	\label{fig:bru_outside}
\end{figure}

\section*{SM3. Evaluation on the Van der Pol oscillator and the Brusselator while time step $h$ out of training range}
\label{vdp_bru_h_outside}
In the previous experiments, we train and test on $h \in (0.01, 0.1)$.
We test the trained integrator on a larger time step range, such as $h \in (0.001, 0.1)$, to study its generalization in terms of time steps.
\Cref{fig:h_small} and \cref{fig:h_small_ordercheck} show that the  performance worsens when $h$ goes to a smaller value.
This demonstrates that the learned integrators
are also adapted to the range of step sizes during training.
Note that for most applications, the current range of step sizes gives sufficiently small errors, thus this is not a major issue of the method.
\begin{figure}[htb!]
\centering
	\begin{subfigure}{0.49\textwidth}
		\centering
		\includegraphics[width=1.0\linewidth]{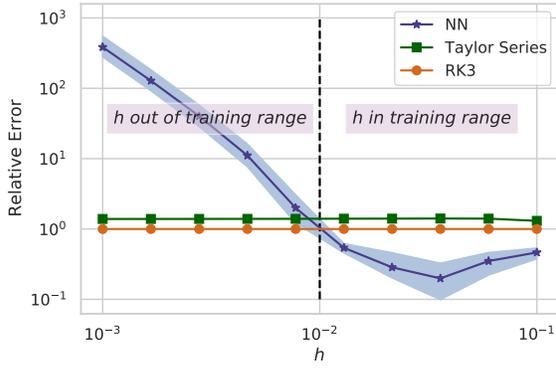}
		\caption{Van der Pol oscillator.}
	\end{subfigure}
	\begin{subfigure}{0.49\textwidth}
		\centering
		\includegraphics[width=1.0\linewidth]{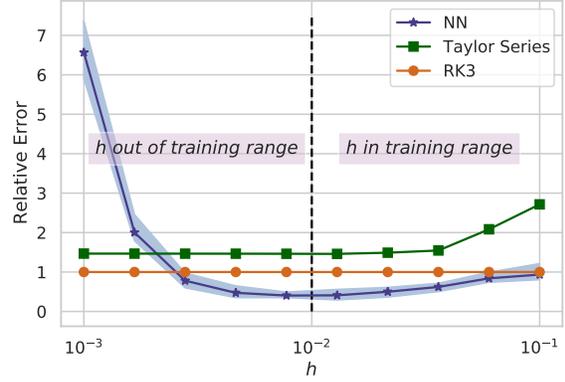}
		\caption{The Brusselator.}
	\end{subfigure}
	\caption{Evaluation on the relative error when $h$ out of training range.}
	\label{fig:h_small}
\end{figure}

\begin{figure}[htb!]
\centering
	\begin{subfigure}{0.49\textwidth}
		\centering
		\includegraphics[width=1.0\linewidth]{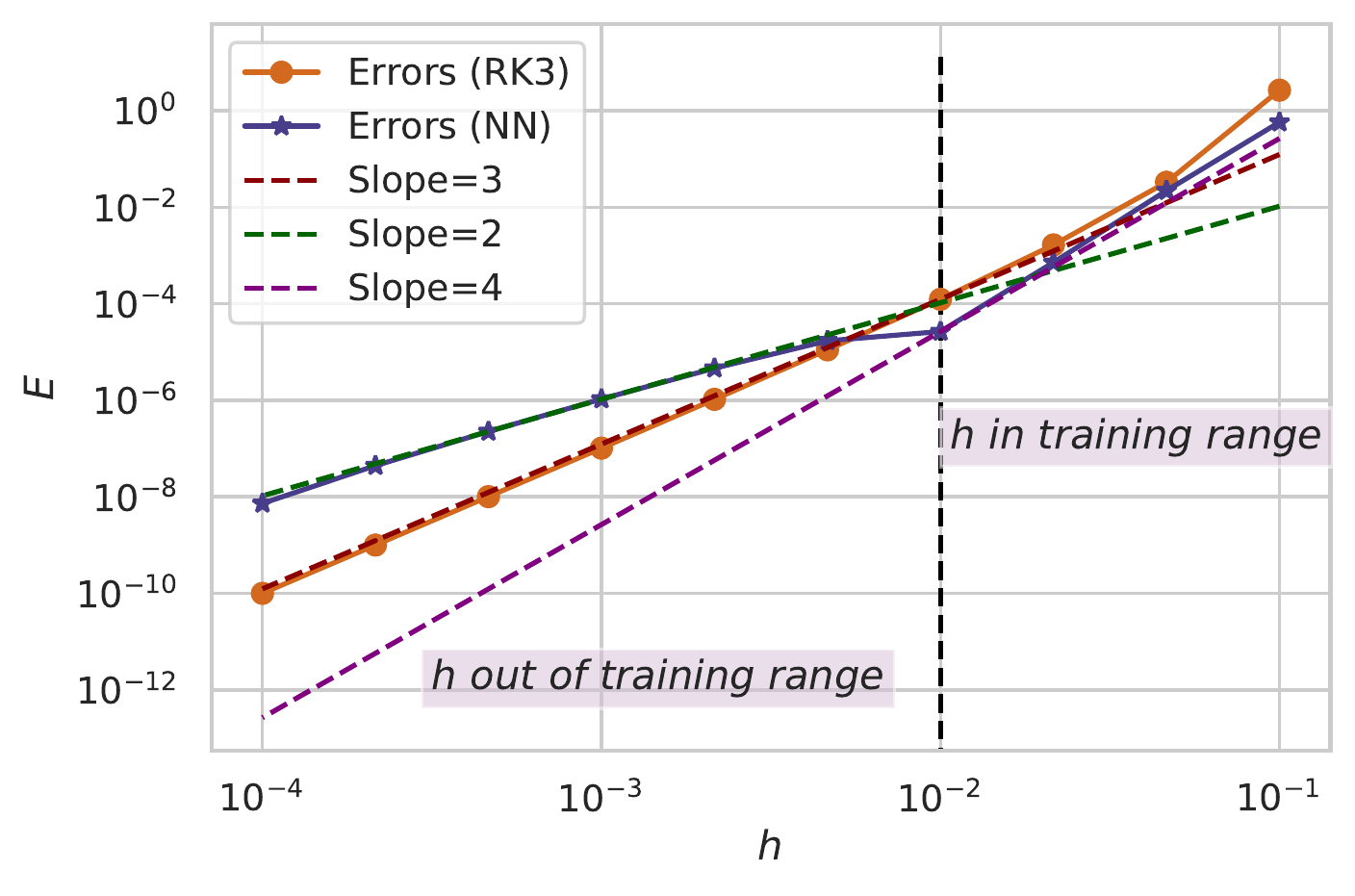}
		\caption{Van der Pol oscillator.}
	\end{subfigure}
	\begin{subfigure}{0.49\textwidth}
		\centering
		\includegraphics[width=1.0\linewidth]{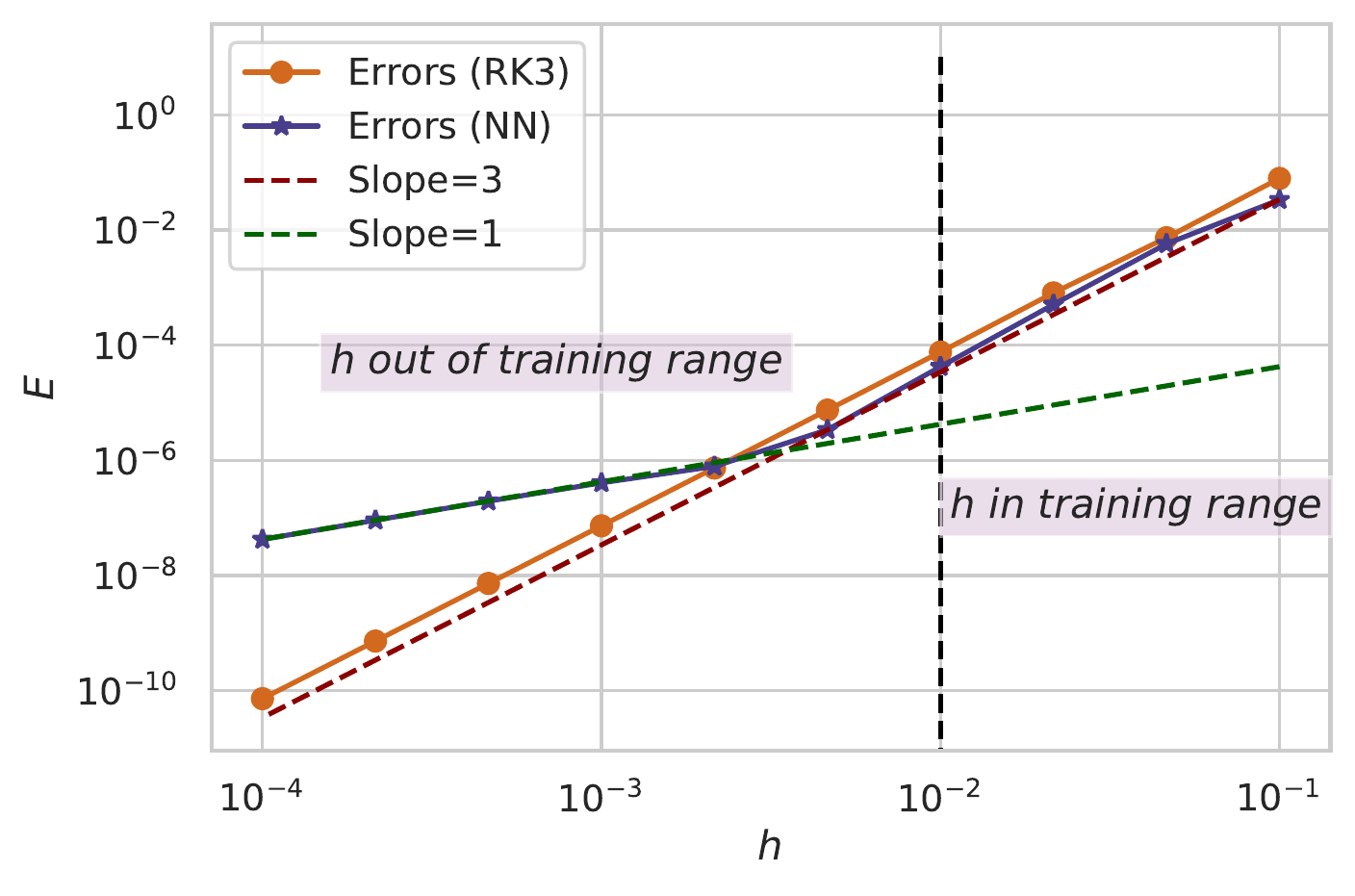}
		\caption{The Brusselator.}
	\end{subfigure}
	\caption{Evaluation on the global error when $h$ out of training range.}
	\label{fig:h_small_ordercheck}
\end{figure}

\section*{SM4. Comparison with three different traditional RK3 methods}
\label{diff_rk3}

In this paper, our goal is to find an approximate Butcher tableau for some given problems and range of time step $h$.
To figure out whether our method is better, we decide to calculate results by RK3 methods with different parameters on our examples.
Recall the formulation of RK3:
\begin{align}
\begin{split}
    \kk_{1} = h f(\boldsymbol{y}_{n}), \quad
		\kk_{2} &= h f(\boldsymbol{y_{n}} + \theta_{1,1} \kk_{1}), \quad
		\kk_{3} = h f(\boldsymbol{y_{n}} + \theta_{2,1} \kk_{1} + \theta_{2,2} \kk_{2}),\\
		\boldsymbol{y}_{n+1} &= \boldsymbol{y}_{n} + \theta_{c1}\kk_{1} + \theta_{c2} \kk_{2} + \theta_{c3}\kk_{3}.
\end{split}
\end{align}
Here is shown three different parameterized RK3:
\begin{itemize}
    \item RK3 (\#1) $\theta_{1,1} = \frac{2}{3}, \theta_{2,1} = -\frac{1}{2}, \theta_{2,2} = \frac{1}{2}, \theta_{c1} = -\frac{1}{4}, \theta_{c2} = \frac{3}{4}, \theta_{c3} = \frac{1}{2}$.
    \item RK3 (\#2) $\theta_{1,1} = \frac{2}{3}, \theta_{2,1} = \frac{1}{6}, \theta_{2,2} = \frac{1}{2}, \theta_{c1} = \frac{1}{4}, \theta_{c2} = \frac{1}{4}, \theta_{c3} = \frac{1}{2}$.
    \item RK3 (\#3) $\theta_{1,1} = \frac{1}{2}, \theta_{2,1} = -1, \theta_{2,2} = 2, \theta_{c1} = \frac{1}{6}, \theta_{c2} = \frac{2}{3}, \theta_{c3} = \frac{1}{6}$.
\end{itemize}
We define the error from RK3 as $E_{RK}$ and from RK-NN as $E_{NN}$, then we have relative error $\frac{E_{NN}}{E_{RK}}$. RK-NN performs better if relative error is smaller than 1. 
\cref{fig:diff_rk3} illustrates the relative errors by RK3 methods with distinct coefficients are almost the same and our RK-NN has a better performance in the training range of time step $h \in (0.01,0.1)$.

\begin{figure}[htb!]
\centering
	\begin{subfigure}{0.49\textwidth}
		\centering
		\includegraphics[width=1.0\linewidth]{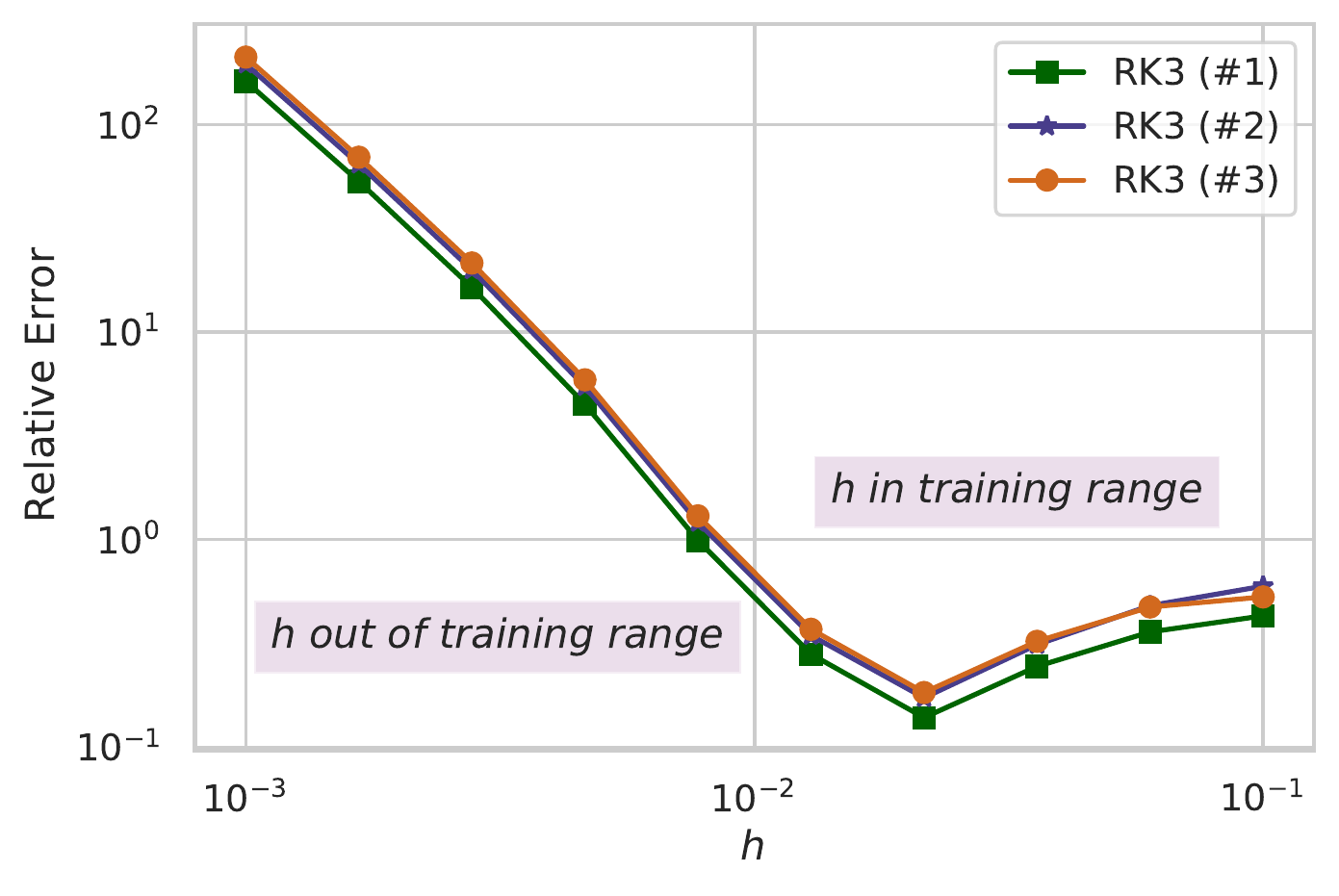}
		\caption{Van der Pol oscillator.}
		\label{fig:vdp_diff_rk3}
	\end{subfigure}
	\begin{subfigure}{0.49\textwidth}
		\centering
		\includegraphics[width=1.0\linewidth]{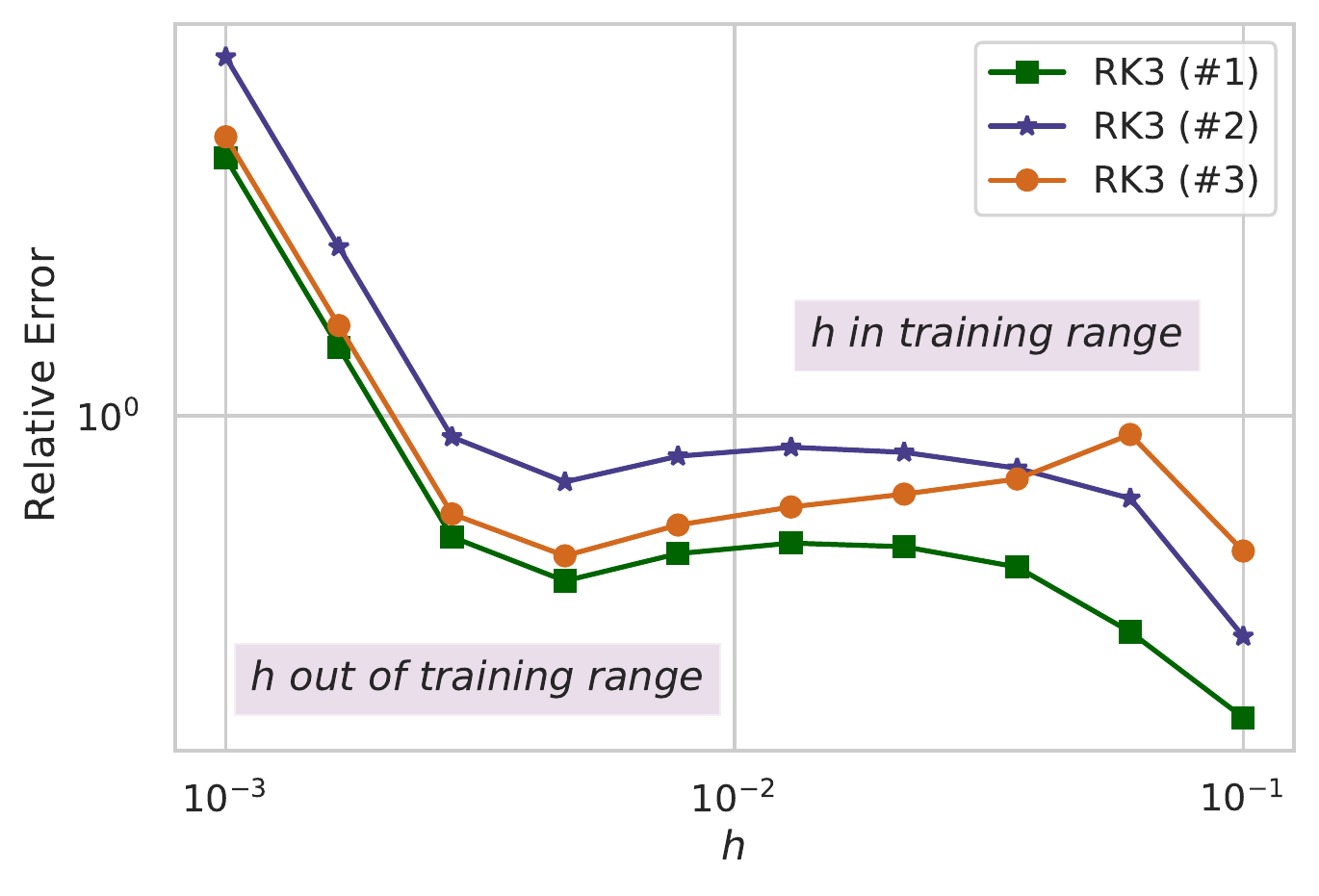}
		\caption{Brusselator.}
		\label{fig:bru_diff_rk3}
	\end{subfigure}
	\caption{Comparison of RK-NN results with different parameterized RK3 methods.}
	\label{fig:diff_rk3}
\end{figure}

\section*{SM5. Training Complexity for ODE systems of different dimension}
\label{training_complexity}

In \cref{fig:Training_Complexity}, we investigate empirically the scalability of our method during training.
We plot the wall-clock time required for one epoch of training on
an example linear family and a nonlinear family as the dimension $d$
of the ODE increases.
We observe that the training cost per epoch has a scaling between $O(d)$ and $O(d^{2})$, meaning that our method
can be effectively applied to moderately high dimensional systems.
In the problems we have tested, the number of epochs required to reach a specified testing accuracy
does not increase significantly with ODE dimension,
and is in general problem dependent
(See \cref{table:no_of_epochs}).

\begin{figure}[htb!]
	\centering	\includegraphics[width=0.6\textwidth]{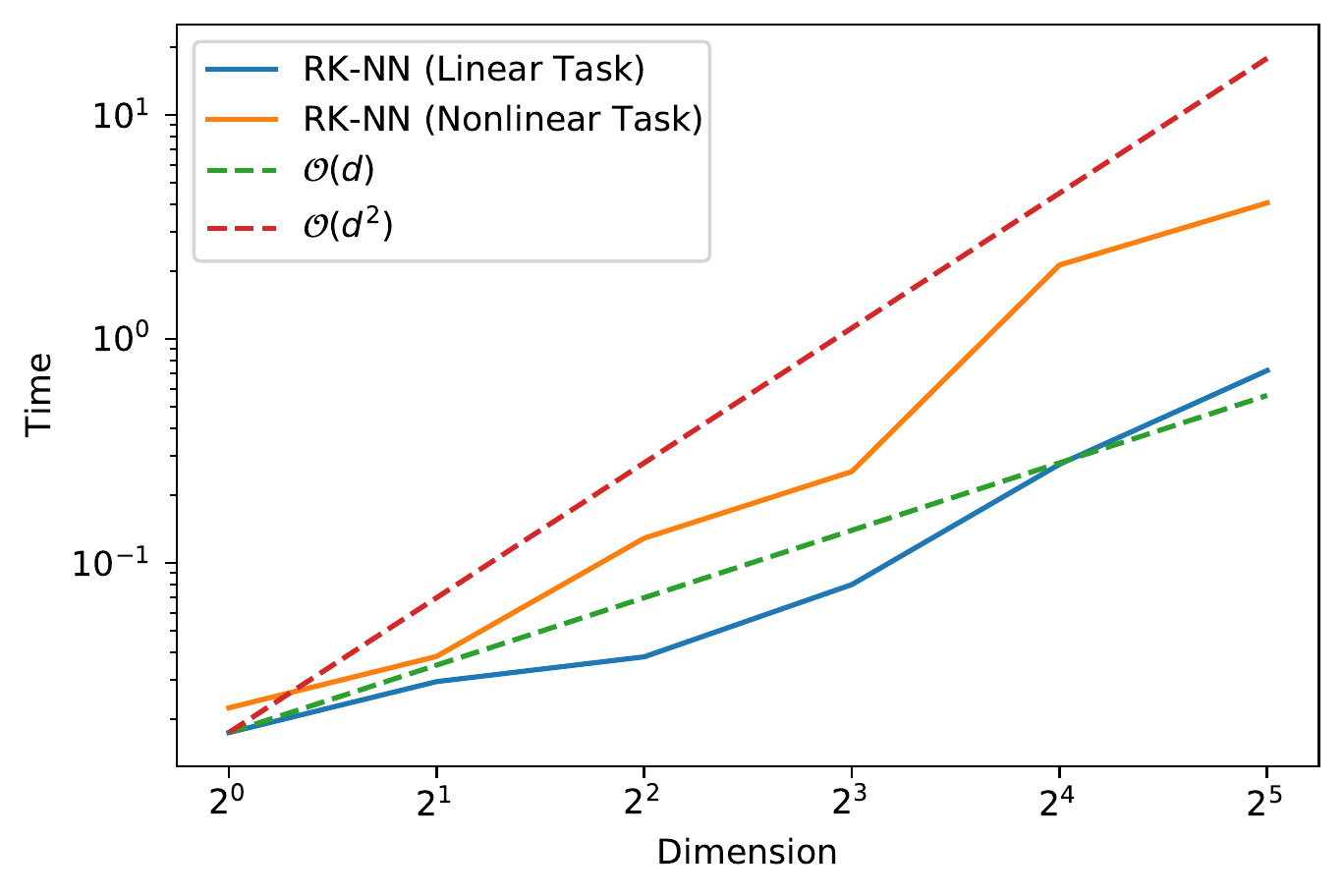}
	\caption{Training time per epoch for different dimensions. The equation of linear tasks is $\frac{\dd \y}{\dd t} = A \y, \y(0) = \y_{0}$, where $A \in \R^{d \times d}$ is a square matrix. The elements of $A$ are independently and identically distributed with
	$A_{ij} \sim  U(-\frac{1}{\sqrt{d}},-\frac{2}{\sqrt{d}})$ and $\y_{0} \sim U(-3,3)$.
	The equation of nonlinear tasks is $\frac{\dd \y}{\dd t} = B \y^{2}, \y(0) = \y_{0}$. $B \in \R^{d \times d}$ is a diagonal matrix whose diagonal elements are randomly generated from different distributions.
	The $i$-th element $B_{ii} \sim U(-2+0.05(i-1), -2+0.05(i+1))$ and $\y_{0} \sim U(1,3)$. In this figure, the x-axis is the dimension $d$ and the y-axis is the wall-clock time required for each training epoch.}
	\label{fig:Training_Complexity}
\end{figure}

\begin{table}[htb!]
    \caption{Comparison of the number of training epochs for ODE systems of different dimensions shown in \cref{fig:Training_Complexity}}
    \label{table:no_of_epochs}
    \vspace{20pt}
    \centering
    \begin{tabular}{p{3.5cm}p{0.8cm}p{0.8cm}p{0.8cm}p{0.8cm}p{0.8cm}p{0.8cm}p{0.8cm}}
        \hline
         Dimension & 1 & 2 & 4 & 8 &  16  & 32 \\
        \hline
        Linear Task & 201 & 201 & 196 & 191 & 187 & 200 \\
        \hline    
        Nonlinear Task & 2336 & 1831 & 1506 & 1511 & 1577 &  1527\\
        \hline
    \end{tabular}
\end{table}

\end{document}